\newtheorem{theorem}{Theorem}
\newtheorem{corollary}[theorem]{Corollary}
\newtheorem{definition}[theorem]{Definition}
\newtheorem{lemma}[theorem]{Lemma}
\newtheorem{proposition}[theorem]{Proposition}
\newtheorem{remark}[theorem]{Remark}
\newcommand{\R}{{\mathbb R}}
\newcommand{\Ff}{{\mathcal F}}
\newcommand{\cS}{{\mathcal S}}
\newcommand{\cB}{{\mathcal B}}
\newcommand{\cN}{{\mathcal N}}
\newcommand{\cL}{{\mathcal L}}
\newcommand{\E}{{\mathbb E}}
\newcommand{\eps}{\epsilon}
\newcommand{\su}{\underline{\sigma}}
\newcommand{\ds}{\displaystyle}
\newcommand{\Eh}{\hat{\mathbb{E}}}
\begin{document}

\title[Model reduction of parametric multiscale diffusions]{Model reduction and uncertainty quantification of multiscale diffusions with parameter uncertainties using nonlinear expectations}

\author{Hafida Bouanani}

\address{Laboratory of Stochastic Models, Statistics and Applications, University of Saida
	Dr Moulay Tahar, Algeria}

\email{hafida.bouanani@univ-saida.dz}

\author{Carsten Hartmann \and Omar Kebiri}

\address{Institute of Mathematics, Brandenburgische Technische Universit\"at Cottbus-Senftenberg, 03046 Cottbus, Germany}

\email{carsten.hartmann@b-tu.de, omar.kebiri@b-tu.de}

\begin{abstract}
	In this paper we study model reduction of linear and bilinear quadratic stochastic control problems with parameter uncertainties. Specifically, we consider slow-fast systems with unknown diffusion coefficient and study the convergence of the slow process in the limit of infinite scale separation. The aim of our work is two-fold: Firstly, we want to propose a general framework for averaging and homogenisation of multiscale systems with parametric uncertainties in the drift or in the diffusion coefficient. Secondly, we want to use this framework to quantify the uncertainty in the reduced system by deriving a limit equation that represents a worst-case scenario for any given (possibly path-dependent) quantity of interest. We do so by reformulating the slow-fast system as an optimal control problem in which the unknown parameter plays the role of a control variable that can take values in a closed bounded set. For systems with unknown diffusion coefficient, the underlying stochastic control problem admits an interpretation in terms of a stochastic differential equation driven by a G-Brownian motion. We prove convergence of the slow process with respect to the nonlinear expectation on the probability space induced by the G-Brownian motion. The idea here is to formulate the nonlinear dynamic programming equation of the underlying control problem as a forward-backward stochastic differential equation in the G-Brownian motion framework (in brief: G-FBSDE), for which convergence can be proved by standard means. We illustrate the theoretical findings with two simple numerical examples, exploiting the connection between fully nonlinear dynamic programming equations and second-order BSDE (2BSDE): a linear quadratic Gaussian regulator problem and a bilinear multiplicative triad that is a standard benchmark system in turbulence and climate modelling.
\end{abstract}

\keywords{Slow-fast system, parametric systems, unknown diffusion, G-Brownian motion, G-FBSDE, fully nonlinear Hamilton-Jacobi-Bellman equation, second-order BSDE, optimal control, linear and bilinear stochastic regulator, multiplicative triad}

\maketitle

\section{Introduction}

Modelling of real-world processes often involves high-dimensional stochastic differential equations with multiple time and length scales. In many cases the relevant behaviour is given by the largest scales in the system (e.g., phase transitions), the direct numerical simulation of which is difficult. Moreover, the system variables may be only be partially observable, or the model may involve unknown parameters, which makes the numerical or analytical treatment of the high-dimensional equations by multiscale techniques as well as the uncertainty quantification (UQ) of derived quantities difficult. Recently, data-driven approaches have been developed in order to account for model uncertainties or partial observability \cite{pmor}. The idea there is to postulate a reduced-order model for some given quantities of interest (QoI), based on either physical principles \cite{generic} or classical multiscale techniques such as averaging and homogenisation \cite{mtv}, and then parameterise the resulting model equations using dense observations of the resolved variables or the QoI \cite{hmm}. For multiscale systems with more than two scales, the parameter estimation is known to be a difficult task, for the standard estimators may be strongly biased and require sophisticated subsampling strategies to reduce the bias \cite{mle}.

\subsection*{A sublinear expectation framework for multiscale diffusions}

We study slow-fast stochastic differential equations (SDE) with unobservable fast variables where the latter are driven by a G-Brownian motion (G-SDE). The approach pursued in this paper is different from the aforementioned methods in that it employs analytical techniques for the elimination of fast variables, like averaging or homogenisation that normally require that the model is fully specified (with all parameters being available), and yet takes the inherent uncertainty of the unresolved or partially resolved (``underresolved'') fast variables into account. Specifically, the parameter uncertainty of the underresolved fast variables is taken into account by modelling them as a stationary G-SDE that then generates a parametric family of probability distributions that, consequently, give rise to a family of reduced-order models for the slow variables or QoI. More specifically, we will study averaging of G-SDE and forward-backward G-SDE (G-FBSDE) and discuss the relation to traditional SDE and FBSDE multiscale methods (e.g.~\cite{mle,dupuis}).

The idea is to consider path functionals of the resolved variables and to maximize the expected difference between the original and the limit dynamics over the uncertain parameters. Exploiting the specific properties of the driving G-Brownian motion and related nonlinear expectation concepts \cite{Peng07}, we show that the maximum difference goes to zero in the limit of infinite scale separation. Depending on the specific situation at hand (i.e. whether drift or diffusion coefficients are uncertain), the nonlinear expectation boils down to a g-expectation \cite{Pengg} and an averaging problem for a standard (uncoupled) FBSDE, or to a G-expectation \cite{Peng07} and an averaging problem for a G-FBSDE.
For finite scale separation, the maximiser of the functional generates a worst-case scenario for the deviation between  the multiscale and the limit dynamics, and thus the maximiser quantifies the uncertainty in the QoI. For finite scale separation, the maximiser of the functional generates a worst-case scenario for the deviation between  the multiscale and the limit dynamics, and thus the maximiser quantifies the uncertainty in the QoI.

\subsection*{Existing work}

Backward stochastic differential equations (BSDE) as a probabilistic representation of semilinear partial differential equations (PDE) have received a lot of attention, starting with the work of Peng and Pardoux \cite{PP90}. After that, the theory of forward-backward stochastic differential equations (FBSDE) and their applications to stochastic control problems developed quickly, following the work of Antonili \cite{A93}; see also \cite{Ma,mayou}.
Recently, Redjil et al. \cite{RC} proved the existence of an optimal control of a controlled forward SDE driven by a G-Brownian motion (G-SDE), allowing to model situations in which the randomness in a model coming from, e.g., measurements or model uncertainties does not satisfy the usual i.i.d. assumption, so that the classical limit theorems like the law of large numbers or the central limit theorem do not apply.
The theory and the stochastic calculus for G-SDE have been developed by Peng and co-workers \cite{Peng07,denis2011}. Relevant preliminary work on existence and uniqueness of fully coupled FBSDE, G-FBSDE and the corresponding dynamic programming (Hamilton-Jacobi Bellman or HJB) equations is due to Redjil \& Choutri \cite{RC} and Kebiri et al. \cite{BKM,BKMM,BGM11,BK}, showing the existence of a relaxed control based on results of El-Karoui et al.~\cite{KNJ}.

Systematic model reduction and uncertainty quantification methods for multiscale parametric systems are still at their infancies, notwithstanding recent advances in the field; see~\cite{benner2015,majda2018,mcdowell2020} and the references therein. Related work on model reduction of controlled multiscale diffusions using duality arguments and Fleming's technique of logarithmic transformations (see~\cite[Ch.~VI]{fleming2006}) has been carried by one of the authors \cite{JCD20,NONL16,JCD14,PTRF18}. Despite recent progress on the theoretical foundations of G-(F)BSDE and G-Brownian motion, there have been relatively few practically oriented works in the context of uncertainty quantification (see e.g. \cite{HSP,Peng04,Peng19}) and even fewer on numerical methods for G-Brownian motions (see e.g. \cite{JZ}).

\subsection*{Outline of the article}

The idea of using the G-Brownian motion framework to do uncertainty quantification for multiscale systems is explained in Section \ref{sec:slowfast}. The key theoretical result of this paper, the convergence of the value function and its derivative, is formulated and proved in Section \ref{sec:convergence}. To illustrate the theoretical findings, we discuss two numerical examples with unceetain diffusions in Section \ref{sec:num}: a linear quadratic Gaussian regulator with uncertain diffusion and an uncontrolled bilinear benchmark system from turbulence modelling. We summarise the key observations and main results in Section \ref{sec:sum}.
The article contains two appendices: Appendix \ref{sec:prelim} records basic definitions related to Peng's nonlinear expectation and inequalities for G-Brownian motion that will be used throughout the article. Appendix \ref{sec:2BSDE} records basic definitions and identities related to the stochastic representations of fully nonlinear partial differential equations in terms of second-order BSDE that are used to carry out the numerical simulations in Section \ref{sec:num}

\section{Slow-fast system}\label{sec:slowfast}

Let $x=(r,u)\in\R^n=\R^{n_s}\times\R^{n_f}$ and $\eps>0$ be a small parameter. We consider slow-fast multiscale SDE models of the form
\begin{subequations}
	\begin{align}\label{homx}
		dR^\eps_t & = \left(f_0(R_t^\eps,U_t^\eps) + \frac{1}{\sqrt{\eps}}f_1(R_t^\eps,U_t^\eps)\right)dt + \alpha(R_t^\eps,U_t^\eps)dV_t\\\label{homy}
		dU^\eps_t & = \frac{1}{\eps}g(R_t^\eps,U_t^\eps;\theta)dt + \frac{1}{\sqrt{\eps}}\beta(R_t^\eps,U_t^\eps;\theta)dW_t\,,
	\end{align}
\end{subequations}
where all coefficients are assumed to be such that the SDE has a unique strong solution for all times. We call $R_t^\eps$ the resolved (slow) variable and $U_t^\eps$ the unresolved (fast) variable that is not fully accessible and depends on an unknown parameter $\theta\in\Theta\subset\R^p$, where for convenience we suppress the dependence on $\theta$.

The aim is to derive a closed equation for $R^{\eps}$ for $\eps\to 0$ that best approximates the resolved process whenever $\eps$ is sufficiently small. Since the fast process depends on an unknown parameter, the answer to the question what the \emph{best approximation} is remains ambigous.

\subsection{Goal-oriented uncertainty quantification}

To illustrate the ambiguity in the reduced dynamics, let us consider the degenerate diffusion
\begin{subequations}\label{avgex}
	\begin{align}\label{avgxex}
		dR_t & = (R_{t} - U_t^{3})dt\,,\quad R_{0}=r\\\label{avgyex}
		dU_t & = \frac{1}{\eps}(R_t^\eps - U_t)dt + \sqrt{\frac{2\theta}{\eps}} dW_t\,,\quad U_{0}=u\,.
	\end{align}
\end{subequations}
for $\theta\in[0,1]$ where, for simplicity, we use the shorthand $(R,U)=(R^{\eps},U^{\eps})\in\R\times\R$ and suppress the dependence on the small parameter $\eps$.

When $\eps\ll 1$, the fast dynamics becomes ``slaved'' by the slow dynamics and randomly fluctuates around $R_{t}$. The unique limiting invariant measure of the fast variables conditional on $R_{t}=r$ is given by $\mu_{r}=\cN(r,\theta)$ when $\theta\in(0,1]$, and singular, $\mu_{r}=\delta_{r}$ for $\theta=0$. As $\eps\to 0$ it follows from the averaging principle (e.g.~\cite[Ch.~7]{freidlin}), that the slow process $R=R^{\eps}$ converges pathwise to a limit process that is the solution of the (here: deterministic) initial value problem
\begin{equation}
	\frac{dr}{dt} = F(r;\theta)\,,\quad r(0)=r\,,
\end{equation}
where
\begin{equation}
	F(r,\theta) = - r^{3} + r(1- 3\theta)\,,\quad \theta\in[0,1]\,.
\end{equation}
Figure \ref{fig:ode} shows the vector field $F(\cdot,\theta)$ for three different values of $\theta$ and illustrates that the limit dynamics undergoes a supercritical pitchfork bifurcation at $\theta=1/3$ at which two asymptotically stable fixed point and an unstable one collapse into one asymptotically stable one.
Note that $F(r,\cdot)$ is continuous at $\theta=0$, nevertheless, depending on value of $\theta$,
the qualitative properties of the limit dynamics change drastically as $\theta$ varies.
It therefore makes sense to modify the best approximation question slightly and instead ask for a worst-case scenario in terms of the unknown parameter for a given quantity of interest (QoI).

\begin{figure}
	\begin{center}
		\includegraphics[width=0.65\textwidth]{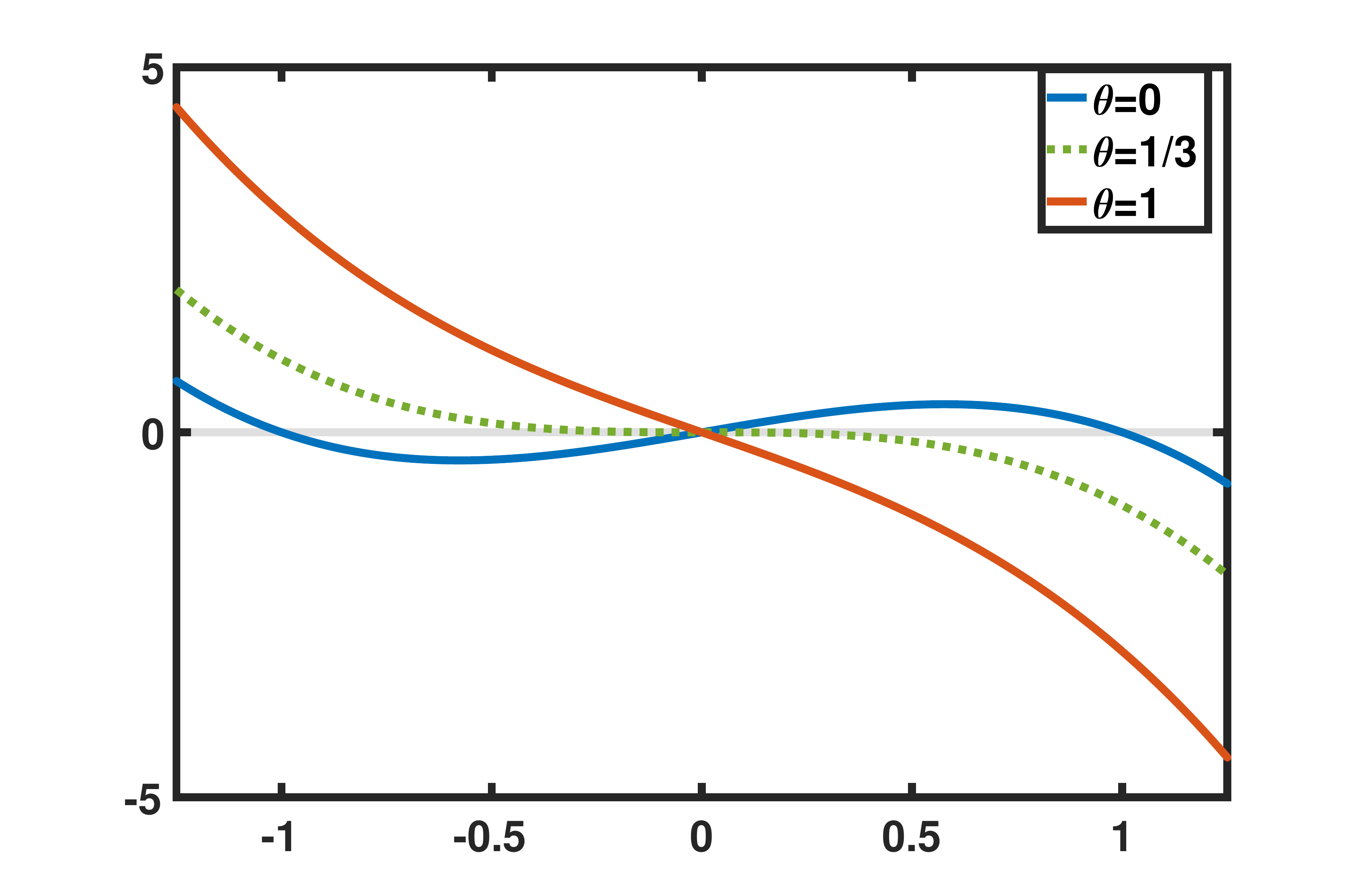}
		\caption{Limiting vector field $F(\cdot,\theta)$ for $\theta\in\{0,\,1/3,\,1\}$. The value $\theta=1/3$ (green dotted curve) corresponds to a supercritical pitchfork bifurcation of the dynamics.}
		\label{fig:ode}
	\end{center}
\end{figure}

Let $\varphi\colon C([0,T])\to\R$ be a suitable test function. The objects of interest are path functionals of the form $\phi^{\eps} = \varphi(R^{\eps})$, with $R^{\eps}=(R^{\eps,\theta}_{t})_{t\in[0,T]}$. To this end let $r=(r_{t}^{\theta})_{t\in[0,T]}$ denote the candidate limit process as $\eps\to 0$ and denote $\phi=\varphi(r)$.

A worst-case scenario for the convergence of $R^{\eps}$ to the limiting process $r$  can be expressed by the G-expectation using the representation formula (\ref{Gexpec}):
\begin{equation}\label{worst-case}
	\hat{\E}\left(|\phi^{\eps}-\phi|\right) = \sup_{\theta\in\Theta}\E_{\theta}\left(|\phi^{\eps}-\phi|\right)\,.
\end{equation}
For example the worst-case approximation for the variance (or the second moment) may be different from the approximation of the slow process itself, in that they correspond to different values of the unknown parameter $\theta$.

If the linear expectation on the right hand side of (\ref{worst-case}) converges for every fixed $\theta\in\Theta$, stability results (e.g.~\cite[Thm.~3.1]{zhang11}) for G-BSDE imply that
\begin{equation}
	\lim_{\eps\to 0}\Eh\left(|\phi^{\eps}-\phi|\right) = 0\,.
\end{equation}
If $\phi^{\eps}$ is regarded as \emph{data}, then the G-expectation defines some kind of tracking problem for the limit dynamics, with $\theta$ playing the role of the control variable. (There may be an additional control variable in the equations though.) An equivalent statement is that the value function, i.e.~the unique viscosity solution of the underlying dynamic programming equation converges as $\eps\to 0$.

One of the messages of the previous considerations is that robust approximations of a multiscale diffusion with parameter uncertainties may depend on the class of test functions $\varphi$ via the optimal parameter $\theta^{*}$. In general, by the dynamic programming principle, $\theta^*=\theta^*(t)$ will be time dependent or a feedback law, therefore the limit equations are not simply obtained by setting $\theta$ equal to some appropriate value. They are moreover goal-oriented, in that they depend on the QoI.

\section{Convergence of the quantity of interest}\label{sec:convergence}

In this section we study the convergence of the slow component of a slow-fast system driven by a G-Brownian motion. Specicifally, we prove convergence of the corresponding value function that is associated with the QoI. For the sake of simplicity, the proof will be given for a linear controlled  G-SDE only, but we stress that the proof carries over to the case of a nonlinear G-SDE with or without control and under standard Lipschitz conditions, using essentially the same techniques.

\subsection*{Controlled linear-quadratic slow-fast system and related QoI}

We consider the following controlled stochastic differential equation
      \begin{equation}\label{xeps}
           dX_{s}^{\eps}=(A^{\epsilon} X_{s}^{\eps}+B^{\epsilon}\alpha_{s})ds+C^{\epsilon}dW_{s}; \quad X_{t}^{\eps}=x,
      \end{equation}
with $X_{s}^{\eps}=(U_s, R_s)$ taking values in $\R^{n_{s}}\times\R^{n_{f}}$ where $n_{s}+n_{f}=n$, and $x=(r,u)$ denotes  the decomposition of the state vector $x$ into slow (resolved) and fast (unresolved) components. We will suppress the dependence of $R$ and $U$ on $\eps$, until further notice. Here $W=(W_{t})_{t\geq0}$ is a standard $\R^{m}$-valued Brownian motion on $(\Omega, \Ff, \mathbb{P})$ that is endowed with its own filtration $(\Ff_{t})_{t\geq0}$, and $\alpha=(\alpha_t)_{t
\ge 0}$ denotes an adapted control variable with values in $\R^k$.
Let
$$A^{\eps}=\left(
               \begin{array}{cc}
                 A_{11} & \eps^{\scriptscriptstyle-1/2}A_{12} \\
                 \eps^{\scriptscriptstyle-1/2}A_{21} & \eps^{-1}A_{22} \\
               \end{array}
             \right)\in\R^{n\times n},$$
with the natural partitioning into $A_{11}\in\R^{n_s\times n_s}$, etc. where assume that the matrix $A_{22}\in\R^{n_f\times n_f}$ is Hurwitz, i.e.~all of its eigenvalues are lying in the open left half-plane. The control and the noise coefficients are partitioned as follows:
$$B^{\epsilon}=\left(
      \begin{array}{c}
        B_{1} \\
        \eps^{\scriptscriptstyle-1/2}B_{2} \\
      \end{array}
    \right)\in\R^{n\times k}, \quad \quad C^{\epsilon}=\left(
                             \begin{array}{c}
                               C_{1} \\
                               \eps^{\scriptscriptstyle-1/2}C_{2} \\
                             \end{array}
                           \right)\in\R^{n\times m}.$$
We assume that, for all $\eps>0$, the columns of $B^\eps$ lie in the column space of the matrix $C^\eps$, i.e.~$\mathrm{ran}(B^\eps)\subset\mathrm{ran}(C^\eps)$ or, equivalently, the column space of $B^\eps$ is orthogonal to the kernel of $(C^\eps)^T$, so that the equation
\begin{equation}
	C^\eps \xi = B^\eps c
\end{equation}
has a (not necessarily unique) solution for every $c\in\R^k$.
We seek a control $\alpha$ that minimises the following quadratic cost functional
           \begin{equation}\label{cost1}
           	J(\alpha; t, x)=\mathbb{E}_{t,x}\left[\frac{1}{2}\int_{t}^{\tau} R_s^{T}Q_{0}R_s+|\alpha_{s}|^{2}  ds+\frac{1}{2}R_\tau^T Q_{1}R_\tau\right],
           \end{equation}
       where $\tau$ is a bounded stopping time given by $\tau=\inf\{s \in [t,T]: X_t\notin S\}$ where $S$ is a bounded subset of $\R^{n_{s}}\times \R^{n_{f}}$  which containe the initial state $x$, and
           where $Q_{0}, Q_{1}\in\R^{n_{s}\times n_{s}}$ are any given symmetric positive semi-definite matrices. Note that even though the cost depends only on the slow process, the expected cost depends on the initial conditions of both $r$ and $u$. We call
           \begin{equation}\label{q0q1}
           	q_{0}= r^TQ_{0}r\,,\quad q_{1}=r^T Q_{1}r\,.
           \end{equation}
       	The corresponding value function is our QoI, it is given by
           \begin{equation}\label{value1}
           	V^{\eps}(t,x)=\displaystyle\inf_{\alpha\in \mathcal{A}}J(\alpha; t, x).
           \end{equation}
       where $\mathcal{A}$ is the space of all admissible controls $\alpha$, such that (\ref{xeps}) has a unique strong solution. (Likewise we may consider $q_0,q_1$ or $J$ to be our quantities of interest.)

           Assuming that all coefficients are known, the averaging principle for linear-quadratic control systems of the form (\ref{xeps})--(\ref{cost1}) implies that, under mild conditions on the system matrices, the value function $V^\eps$ converges uniformly on any compact subset of $[0,T]\times \R^n$ to a value function $v=v(t,r)$; see e.g.~\cite{KNH18}. The latter is the value function of the following linear-quadratic stochastic control problem: minimise the reduced cost functional
           \begin{equation}\label{redcost}
           	\bar{J}(\alpha; t,r)=\mathbb{E}_{t,r}\left[\frac{1}{2}\int_{t}^{\tau}q_{0}(\bar{R}_{s})+|\alpha_{s}|^{2}\,ds+\frac{1}{2}q_{1}(\bar{R}_{\tau})\right],
           \end{equation}
			subject to
           \begin{equation}\label{limiting1}
           	d\bar{R}_{s}=(\bar{A}\bar{R}_{s}+\bar{B}\alpha_s)ds+\bar{C}dW_{s},
           \end{equation}
           where the coefficients of the reduced system are given by
           \begin{equation}
           	\bar{A}=A_{11}-A_{12}A_{22}^{-1}A_{21}\,,\; \bar{B}=B_{1}-A_{12}A_{22}^{-1}B_{2}\,,\; \bar{C}=C_{1}-A_{12}A_{22}^{-1}C_{2}\,.
           \end{equation}

\subsection*{Multiscale system with unknown diffusion coefficient}

We suppose that the noise coefficients $C_1$ and/or $C_2$ are unknown. This situation is common in many applications, since especially the diffusion coefficient of the unresolved variables is difficult to estimate. Very often, however, an educated guess can be made as to which set or interval the unknown coefficient lies in. Specifically, we suppose that $(C_{1},\,C_{2})^T \in \mathcal{A}_{\scriptscriptstyle0, \infty}^{\Theta}$ which is the collection of all $\Theta$-valued adapted process on $[0, \infty)$ where $\Theta$ is a given bounded and closed subset in $\R^{(n_s+n_f)\times m}$.

Following the work by Denis and co-workers \cite{denis2011,denis2006} we exploit the link between the $G-$expectation framework and diffusion controlled processes and define
                   $$D^{\eps}\tilde{W}_{t}=\int_{0}^{t}C dW_{s},$$
for each $C^{\eps}\in\mathcal{A}_{\scriptscriptstyle0, \infty}^{\Theta}$, such that
                 \[
                 C^{\eps}=D^{\eps}\left(
                   \begin{array}{c}
                     C_{1} \\
                     C_{2} \\
                   \end{array}
                           \right)\,,\quad D^{\eps}=\left(
                           \begin{array}{cc}
                           	I_{n_{s}} & 0\\
                           	0& \eps^{-1/2}I_{n_{f}} \\
                           \end{array}
                           \right)
                           \]

so that $(\tilde{W}_{s})_{s\geq0}$ is a $d-$dimensional {G}-{B}rownian motion. As the main result, we will show below that the value function converges uniformly on any compact subset of $[0,T]\times\R^n$. The result does not rely on any compactness or periodicity assumptions of the fast variables with unknown diffusion; the key idea is to recast the fully nonlinear dynamic programming (or: G-Hamilton-Jacobi-Bellman) equation of the full G-stochastic optimal control problem as a G-FBSDE and then study convergence to the limiting G-FBSDE, which implies convergence of the corresponding dynamic programming equation.

\subsection*{Nonlinear dynamic programming equation} By the dynamic programming principle for controlled G-SDE \cite{fei2013optimal}, the G-Hamilton-Jacobi-Bellman (G-HJB) equation associated with our uncertain stochastic control problem (\ref{xeps})--(\ref{cost1}) reads
\begin{equation}\label{HJBred1}
    -\frac{\partial v^\eps}{\partial t}=\displaystyle\inf_{c}\{G(DD^{T}\colon \nabla^{2}v^\eps)+\langle\nabla v^\eps, A x+B c\rangle+\frac{1}{2}q_{0}+\frac{1}{2}|c|^{2})\},
\end{equation}
with terminal condition
\begin{equation}
	v^\eps(\tau,\cdot) = \frac{1}{2}q_1\,.
\end{equation}
Note that we $v^\eps$ is different from the value function $V^\eps$ in (\ref{value1}), since the diffusion coefficient in (\ref{value1}) is assumed constant, whereas, here, it is part of the nonlinear generator that involves a maximisation over the coefficient. Further note that we have dropped the $\eps$ in $A=A^\eps$, $B=B^\eps$ and $D=D^\eps$.
We can get rid of the outer infimum since the diffusion part is independent of the control variable, and
\[
\inf_c\left\{\langle\nabla V^\eps, B^\eps c\rangle + \frac{1}{2}|c|^{2}\right\} = -  \frac{1}{2}|c|^2_{BB^T}\,,
\]
where $|c^2|_{BB^T}=\langle c,BB^Tc\rangle$. This implies that (\ref{HJBred1}) is equivalent to
 \begin{equation}\label{HJBred2}
    \frac{\partial v}{\partial t}+G(DD^{T}\colon \nabla^{2}v)+\langle\nabla v, A x \rangle-\frac{1}{2}|c|^2_{BB^T} + \frac{1}{2}q_{0}=0\,,
\end{equation}
with the associated {G}-FBSDE system given by
\begin{equation}\label{I1}
    \begin{aligned}
           dX_{s}^{\eps}  = & A X_{s}^{\eps}ds+D\,d\tilde{W}_{s}\,,\; X_t^\eps=x \\
           Y^\eps_{t} = & \frac{1}{2}q_{1}(R_{\tau})-\frac{1}{2}\int_{t}^{\tau} q_0(R_s)\,ds + \frac{1}{2}\int_{t}^{\tau}|B^{T}(D^{T})^{\sharp}Z_s^{\eps}|^{2}\,ds \\
            & - \int_{t}^{\tau}Z_s^{\eps}d\tilde{W}_{s}-(K_{\tau}-K_{t})
         \end{aligned}
\end{equation}
Here
\begin{equation}
	Y_s^\eps = v^\eps(s,X^\eps_s)\,,\quad Z_s^\eps = \nabla v^\eps(s,X_s^\eps)\,,\quad t\le s\le \tau\,,
\end{equation}
and $\sharp$ denotes the Moore-Penrose pseudo inverse of a matrix. The process $K$ is a decreasing G-martingale with $K_0=0$ that is a consequence of the G-martingale representation theorem \cite{Peng07}.

\subsection*{Strong convergence of the quantity of interest}

Since the G-FBSDE is decoupled and running and terminal cost $q_0, q_1$ depend only on the resolved variables, we can infer the candidate for the limiting process:
\begin{equation}\label{limitin3}
	d\bar{R}_{s}=(\bar{A}\bar{R}_{s}+\bar{B}\alpha_s)\,ds+ \bar{D}\,d\tilde{W}_s
\end{equation}
with $\bar{D}\,d\tilde{W}$ given by
\begin{align*}
\bar{C}dW_{s} & =(C_{1}-A_{12}A_{22}^{-1}C_{2})dW_{s}\\ & = C_{1}dW_{s}-A_{12}A_{22}^{-1}C_{2}dW_{s} \\ & = d\tilde{W}_{s}-A_{12}A_{22}^{-1}d\tilde{W}_{s}\\
& =: \bar{D}\,d\tilde{W}\,.
\end{align*}
in other words, $\bar{D}=(I_{n_{s}}, -A_{12}A_{22}^{-1})$. The associated limiting G-FBSDE reads
\begin{equation}\label{I2}
	\begin{aligned}
		d\bar{R}_{s}  = & \bar{A}\bar{R}_{s}ds- \bar{D}\, d\tilde{W}_{s}\,,\; \bar{R}_t=r\\
		\bar{Y}_{s}  = & \frac{1}{2} q_{1}(\bar{R}_{\tau})-\frac{1}{2}\int_{t}^{\tau}q_0(\bar{R}_{s})\,ds + \frac{1}{2}\int_{t}^{\tau}|\bar{B}^{T}(\bar{D}^{T})^{\sharp}\bar{Z_s}|^{2}ds \\  & - \int_{t}^{\tau}\bar{Z_s}d\tilde{W}_{s} - (\bar{K}_{\tau}-\bar{K}_{t})
	\end{aligned}
\end{equation}
The corresponding limit G-HJB equation is then given by
\begin{equation}\label{HJBredf}
    \frac{\partial \bar{v}}{\partial t}+G(\bar{D}\bar{D}^{T}\colon\nabla^{2}\bar{v})+\langle\nabla \bar{v}, \bar{A} r \rangle-\frac{1}{2}|\nabla \bar{v}|^{2}_{\bar{B}\bar{B}^T} + \frac{1}{2} q_{0}=0.
\end{equation}
with the natural terminal condition
\begin{equation}
	\bar{v}(\tau,\cdot) = \frac{1}{2}q_1\,.
\end{equation}

\begin{theorem}\label{theo}
Let $v^{\eps}$ be the classical solution of the dynamic programming equation \ref{HJBred2} and $\bar{v}$ be the solution of \ref{HJBredf}, then, as $\eps\to 0$
\[
v^{\eps}\rightarrow \bar{v}\,,\quad \nabla v^{\eps}\rightarrow \nabla \bar{v}
\]
where the convergence of $v^\eps$ is uniform on any compact subset of $[0,T]\times \R^{n_s}$ and pointwise for $\nabla v^\eps$ for all $(t,x)\in [0,T]\times \R^{n_s}$.
\end{theorem}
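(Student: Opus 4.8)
The plan is to establish Theorem~\ref{theo} through the nonlinear Feynman--Kac correspondence between the fully nonlinear dynamic programming equations (\ref{HJBred2})--(\ref{HJBredf}) and the decoupled G-FBSDE systems (\ref{I1})--(\ref{I2}), so that the convergence of $v^\eps$ and of its gradient follows once the solution of (\ref{I1}) is shown to converge to that of (\ref{I2}). Since $v^\eps$ and $\bar v$ are assumed classical, G-It\^o's formula (Appendix~\ref{sec:prelim}) identifies the solution of (\ref{I1}) as $Y^\eps_s=v^\eps(s,X^\eps_s)$ and $Z^\eps_s=(D^\eps)^T\nabla v^\eps(s,X^\eps_s)$, with the decreasing G-martingale $K^\eps$ absorbing the gap between $G(DD^T\colon\nabla^2 v^\eps)$ and the realized quadratic-variation term, and similarly for (\ref{I2}). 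Evaluating at $s=t$, where $X^\eps_t=x$ is deterministic, gives $v^\eps(t,x)=Y^\eps_t$ and $\bar v(t,r)=\bar Y_t$, so it suffices to prove $Y^\eps_t\to\bar Y_t$ with a bound uniform for $x$ in a compact set, together with $Z^\eps_t\to\bar Z_t$; the gradient statement then follows from the identification of $Z$ and, for the linear-quadratic problem at hand, from uniform $C^2$-bounds on $v^\eps$ supplied by the associated Riccati asymptotics (the fast and mixed blocks of the Riccati matrix being of order $\eps$ and $\sqrt\eps$), via an Arzel\`a--Ascoli argument.

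\emph{Forward convergence.} The forward dynamics in (\ref{I1}) is linear with Hurwitz fast block $\eps^{-1}A_{22}$, so its convergence is a linear singular-perturbation statement that I would carry out directly in the G-calculus: writing $X^\eps$ by variation of constants and using that the fast block of $e^{A^\eps(s-t)}$ contracts at rate $\lambda/\eps$ for some $\lambda>0$, one obtains $\hat{\E}\big[\sup_{t\le s\le\tau^\eps\wedge\bar\tau}|R^\eps_s-\bar R_s|^2\big]\to0$ with $\bar R$ solving (\ref{limitin3}). The only genuine inputs are the G-Burkholder--Davis--Gundy inequality of Appendix~\ref{sec:prelim} for the stochastic-convolution terms and a Gr\"onwall estimate; the Hurwitz property is precisely what turns the singular contribution $\eps^{-1/2}A_{12}U^\eps_s$ to the slow drift into $-A_{12}A_{22}^{-1}A_{21}R^\eps_s$ plus the noise correction $-A_{12}A_{22}^{-1}C_2\,d\tilde W_s$, producing $\bar A$ and $\bar D$. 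In parallel I need $\tau^\eps\to\bar\tau$ quasi-surely, which follows from transversality of the limiting flow at $\partial S$ together with the pathwise closeness just established --- this is where a mild regularity hypothesis on $S$ is used. A point requiring care is that $\|A^\eps\|\sim\eps^{-1}$, so the Gr\"onwall constants have to be read off the block structure and the bounded stationary statistics of the fast variable rather than off $\|A^\eps\|$ itself.

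\emph{Backward convergence.} Granted the forward limit, set $\delta Y=Y^\eps-\bar Y$, $\delta Z=Z^\eps-\bar Z$, $\delta K=K^\eps-\bar K$. Since $v^\eps$ and $\bar v$ are classical, their gradients are bounded on the relevant compact set, so the quadratic terms $\tfrac12|B^T(D^T)^\sharp Z^\eps|^2$ and $\tfrac12|\bar B^T(\bar D^T)^\sharp\bar Z|^2$ can be treated as globally Lipschitz in $Z$ on the pertinent bounded range, with an $\eps$-independent constant --- the standard device for bringing a quadratic G-BSDE within the scope of the linear stability theory. Then $(\delta Y,\delta Z,\delta K)$ solves a G-BSDE whose data are the terminal mismatch $\tfrac12\big(q_1(R^\eps_{\tau^\eps})-q_1(\bar R_{\bar\tau})\big)$, the running mismatch $\tfrac12\int(q_0(R^\eps_s)-q_0(\bar R_s))\,ds$, and the mismatch of the generator coefficients $B^T(D^T)^\sharp$ acting on $Z^\eps$. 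The first two tend to zero in $L^2$ by forward convergence and Lipschitz continuity of $q_0,q_1$; for the third I use that, by the block structure of $B^\eps,D^\eps,\bar B,\bar D$, the maps $(B^\eps)^T((D^\eps)^T)^\sharp$ and $\bar B^T(\bar D^T)^\sharp$ are both $\eps$-independent and coincide on $\mathrm{ran}(\bar D^T)$, the subspace in which $\bar Z$ lives, so this mismatch is of order $|\delta Z|$ and is absorbed by the a priori G-BSDE estimate (\cite[Thm.~3.1]{zhang11}). This yields $\hat{\E}\big[\sup_s|\delta Y_s|^2\big]+\hat{\E}\big[\int_t^\tau|\delta Z_s|^2\,d\langle\tilde W\rangle_s\big]\to0$, whence $v^\eps(t,x)\to\bar v(t,r)$ and, with the $C^2$-bounds and Arzel\`a--Ascoli, $\nabla v^\eps\to\nabla\bar v$ pointwise; uniformity on compacts for $v^\eps$ follows because all constants remain bounded for $x$ in a compact subset of $[0,T]\times\R^{n}$.

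\emph{Main obstacle.} The hard part will be the interplay between the quadratic growth in $Z$ and the singular scaling. The genuine G-FBSDE integrand is $Z^\eps=(D^\eps)^T\nabla v^\eps=(\nabla_r v^\eps,\ \eps^{-1/2}\nabla_u v^\eps)$, so passing to the limit in $\tfrac12|(B^\eps)^T\nabla v^\eps|^2=\tfrac12|B_1^T\nabla_r v^\eps+\eps^{-1/2}B_2^T\nabla_u v^\eps|^2$ forces one to show that the boundary-layer quantity $\eps^{-1/2}\nabla_u v^\eps$ does not blow up but converges to the slaved value $-A_{22}^{-T}A_{12}^T\nabla\bar v$, so that the limit is exactly $\tfrac12|\bar B^T\nabla\bar v|^2$. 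Establishing this fast-gradient limit with enough uniformity to pass it through the nonlinearity --- rather than merely controlling the value function --- is the technical heart of the argument, alongside the $\eps$-dependence and convergence of the exit time $\tau^\eps$ in the G-framework, where quasi-sure analysis of random horizons is delicate.
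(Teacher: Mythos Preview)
Your plan is correct and follows the same high-level strategy as the paper: represent $v^\eps$ and $\bar v$ via the decoupled G-FBSDE systems (\ref{I1})--(\ref{I2}), invoke the forward averaging limit $R^\eps\to\bar R$, and then use the G-BSDE stability result of Zhang--Chen \cite[Thm.~3.1]{zhang11} to pass the backward component to the limit.

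The execution differs in a few places worth noting. The paper carries out the backward stability step by hand rather than as a black box: it applies G-It\^o's formula to $e^{\gamma t}|Y^\eps_t-\bar Y_t|^2$, uses Young's inequality (Lemma~\ref{lem:young}) with parameters satisfying $\lambda_1+\lambda_2=2\gamma$ to absorb the cross terms, and combines this with the BDG-type inequalities (\ref{BDG1})--(\ref{BDG2}) to obtain a closed estimate on both $\hat\E[\sup_s|Y^\eps_s-\bar Y_s|^2]$ and $\hat\E[\int|Z^\eps_s-\bar Z_s|^2\,ds]$, at rate $\eps^2$. For the forward limit it simply quotes the $O(\eps)$ pathwise rate from \cite{kifer2003,kifer2004} rather than redoing the linear singular-perturbation computation you sketch. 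For the gradient, the paper reads off $\nabla v^\eps\to\nabla\bar v$ directly from the $Z$-convergence, without your Riccati/Arzel\`a--Ascoli detour. The quadratic term is handled in the paper by the algebraic identity $B^T(D^T)^\sharp Z^\eps-\bar B^T(\bar D^T)^\sharp\bar Z=N(Z^\eps-\bar Z)$ with $N=(B_1,B_2)^T$, which reduces the difference of squares to $k_s^T N(Z^\eps_s-\bar Z_s)$ and is then absorbed into the $Z$-integral; this is precisely the mechanism that takes care of the boundary-layer quantity $\eps^{-1/2}\nabla_u v^\eps$ you flag as the main obstacle. Your outline is more explicit about the remaining technical issues --- the $\eps$-dependence of the stopping time and the quasi-sure analysis of $\tau^\eps$ --- which the paper does not address separately, but the underlying argument is the same.
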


\begin{proof}
Subtracting the G-BSDE part of (\ref{I2}) from (\ref{I1}) yields
\begin{equation}
	\begin{aligned}
    Y_{t}^\eps-\bar{Y}_{t} &= \frac{1}{2}q_{1}(R_\tau)-\frac{1}{2}q_{1}(\bar{R}_{\tau})                    -\frac{1}{2}\int_{t}^{\tau}q_0(R_{s})\,ds + \frac{1}{2}\int_{t}^{\tau}q_0(\bar{R}_{s})\,ds\\ & + \frac{1}{2}\int_{t}^{\tau}|B^{T}(D^{T})^{\sharp}Z_s^{\eps}|^{2}ds
 - \frac{1}{2}\int_{t}^{\tau}|\bar{B}^{T}(\bar{D}^{T})^{\sharp}\bar{Z_s}|^{2}ds\\
                      &-\int_{t}^{\tau}Z_s^{\eps}d\tilde{W}_{s}+\int_{t}^{\tau}\bar{Z_s}d\tilde{W}_{s}
                      -(K_{\tau}-K_{t})+(\bar{K}_{\tau}-\bar{K}_{t})
\end{aligned}
\end{equation}
Let $\gamma>0$ be arbitrary. Defining $y_{t}=Y_{t}^\eps- \bar{Y}_{t}, M_{t}=K_{t}-\bar{K}_{t}$, we can apply It\^{o}'s formula to $|y_{t}|^{2}e^{\gamma t}$ for $0\leq t <\tau\le T$, which yields

\begin{equation}\label{itoy2}
	\begin{aligned}
   |y_{t} |^{2}e^{\gamma t} & + \int_{t}^{\tau}|Z_s^{\eps}-\bar{Z_s}|^{2}d\langle   \tilde{W}\rangle_{s}+\int_{t}^{\tau}\gamma|y_{s}|^{2}e^{\gamma s}ds\\
    = &   \left|\frac{1}{2}q_{1}(R_\tau)-\frac{1}{2}q_{1}(\bar{R}_{\tau})\right|^{2}e^{\gamma \tau}  - \int_{t}^{\tau}y_{s}e^{\gamma s} \left(q_0(R_{s})-q_0(\bar{R}_{s})\right)ds\\
                 & + \int_{t}^{\tau}y_{s}e^{\gamma s}\left(|B^{T}(D^{T})^{\sharp}Z_s^{\eps}|^{2}-|\bar{B}^{T}(\bar{D}^{T})^{\sharp}\bar{Z_s}|^{2}\right)ds - (\bar{M}_{\tau}-\bar{M}_{t}),
\end{aligned}
\end{equation}
where
\[
\bar{M}_{\tau}-\bar{M}_{t}=2\int_{t}^{\tau}y_{s}e^{\gamma s}dM_{s}+2\int_{t}^{\tau}y_{s}e^{\frac{\gamma s}{2}}\left(Z_s^{\eps}-\bar{Z_{s}}\right)d\tilde{W}_{s}\,.
\]
It is convenient to write $e^{\gamma s}$ on the right hand side as $e^{\gamma s/2}e^{\gamma s/2}$. Now
dropping the quadratic variation term on the left and using Young's inequality (cf.~Lemma \ref{lem:young}) gives after rearranging terms
\begin{equation}
	\begin{aligned}
   |y_{t} |^{2}e^{\gamma t} + & \gamma\int_{t}^{\tau}|y_{s}|^{2}e^{\gamma s}ds+(\bar{M}_{\tau}-\bar{M}_{t}) \leq \left|\frac{1}{2}q_{1}(R_\tau)-\frac{1}{2}q_{1}(\bar{R}_{\tau})\right|^{2}e^{\gamma \tau}  \\
           &  +\int_{t}^{\tau}\left(\frac{\lambda_{1}}{2}|y_{s}|^{2}e^{\gamma s}+\frac{e^{\gamma s}}{2\lambda_{1}} \left(q_0(\bar{R}_{s}) - q_0(R_s)\right)^{2}\right)ds\\
                 &+\int_{t}^{\tau}\left(\frac{\lambda_{2}}{2}|y_{s}|^{2}e^{\gamma s}+\frac{e^{\gamma s}}{2\lambda_{2}}\left(|B^{T}(D^{T})^{\sharp}Z_s^{\eps}|^{2}-|\bar{B}^{T}(\bar{D}^{T})^{\sharp}\bar{Z_s}|^{2}\right)^{2}\right)ds,
\end{aligned}
\end{equation}
where we have defined $\lambda_1,\lambda_2$ by $\gamma=\lambda_1/2+\lambda_2/2$. As a consequence,
\begin{equation}
	\begin{aligned}
		|y_{t} |^{2}e^{\gamma t} +(\bar{M}_{\tau}-\bar{M}_{t}) & \leq \left|\frac{1}{2}q_{1}(R_\tau)-\frac{1}{2}q_{1}(\bar{R}_{\tau})\right|^{2}e^{\gamma \tau}  \\
		&  +\int_{t}^{\tau}\frac{e^{\gamma s}}{2\lambda_{1}} \left(q_0(\bar{R}_{s}) - q_0(R_s)\right)^{2}\,ds\\
		&+\int_{t}^{\tau} \frac{e^{\gamma s}}{2\lambda_{2}}\left(|B^{T}(D^{T})^{\sharp}Z_s^{\eps}|^{2}-|\bar{B}^{T}(\bar{D}^{T})^{\sharp}\bar{Z_s}|^{2}\right)^{2}\,ds.
	\end{aligned}
\end{equation}
Using the shorthands $N=(B_{1}, B_{2})^{T}$ and $k_{s}=\left(NZ_s^{\eps}+N\bar{Z}_s\right)$, with
$$\ds\left((B^{T}((D^{\eps})^{T})^{\sharp}Z_s^{\eps})-(\bar{B}^{T}(D^{T})^{\sharp}\bar{Z_s})\right)=\left(NZ_s^{\eps}-N\bar{Z_s}\right),$$
the pathwise convergence
\[\E\left[\sup_{t\in[0,T]}|R_t - \bar{R}_t|^2\right]=\mathcal{O}(\eps)
\]
as $\eps\to 0$ for any fixed diffusion coefficient (e.g. \cite{kifer2003,kifer2004}), together with the stability result of Zhang and Chen  \cite[Thm.~3.1]{zhang11}, then implies that
\begin{equation}
	\begin{aligned}
   |y_{t} |^{2}e^{\gamma t}+(\bar{M}_{\tau}-\bar{M}_{t}) \leq & \frac{l\eps^{2}e^{\gamma \tau}}{4}
                +\int_{t}^{\tau}\frac{l\eps^{2}e^{\gamma  s}}{2\lambda_{1}} ds\\
                 &+\int_{t}^{\tau}\frac{|k_{s}|^{2}\|NN^{T}\|_F}{2\lambda_{2}}|Z_s^{\eps}-\bar{Z_s}|^{2} e^{\gamma s}ds\\
	\end{aligned}
\end{equation}
for some generic constant $l\in(0,\infty)$ that may change from equation to equation. Taking the supremum and the using the fact that $\bar{M}$ is a symmetric G-martingale, it follows again by Young's inequality that

\begin{equation}\label{supy2}
   \Eh\left(\ds\sup_{s\in[t, \tau]}|y_{s} |^{2}e^{\gamma s}\right) \leq  \frac{l\eps^{2}e^{\gamma \tau}}{4} +\frac{l \eps^{2}e^{\gamma \tau}}{2\gamma\lambda_{1}}
                 +\frac{l}{2\lambda_{2}}\ds\Eh\left(\int_{t}^{\tau}|Z_s^{\eps}-\bar{Z_s}|^{2}e^{\gamma s}ds\right).
\end{equation}

Now using (\ref{itoy2}) again, together with the BDG-type inequalities (\ref{BDG1})--(\ref{BDG2}) for the quadratic variation and Young's inequality for the integrals involving $y_s e^{\gamma s}$ on the right hand side, we obtain after dropping the quadratic terms in $y$: 
\begin{equation}
	\begin{aligned}
   \su^{2}\Eh\left(\int_{t}^{\tau}|Z_s^{\eps}-\bar{Z_s}|^{2}e^{\gamma s}ds\right) \leq &   \frac{|l\eps|^{2}e^{\gamma \tau}}{4} +\int_{t}^{\tau}\frac{|l\eps|^{2}e^{\gamma s}}{2\alpha_{1}} ds\\
                 &+\frac{(k_{1})^{2}NN^{T}}{2\alpha_{2}}\ds\Eh\left(\int_{t}^{\tau}|Z_s^{\eps}-\bar{Z_s}|^{2}e^{\gamma s}ds\right).
	\end{aligned}
\end{equation}%
where $\alpha_1,\alpha_2$ are defined by $\gamma=\alpha_1/2+\alpha_2/2$. Hence 
\begin{equation}
	\begin{aligned}
   \Eh\left(\int_{t}^{\tau}|Z_s^{\eps}-\bar{Z_s}|^{2}e^{\gamma s}ds\right) \leq &  \frac{|l\eps|^{2}e^{\gamma \tau}}{4\su^{2}} +\int_{t}^{\tau}\frac{|l\eps|^{2}e^{\gamma s}}{2\alpha_{1}\su^{2}} ds\\
                 &+\frac{(k_{1})^{2}NN^{T}}{2\su^{2}\alpha_{2}}\ds\Eh\left(\int_{t}^{\tau}|Z_s^{\eps}-\bar{Z_s}|^{2}e^{\gamma s}ds\right),
	\end{aligned}
\end{equation}
which can be rearranged to give 
\begin{equation}
	\begin{aligned}
   \left(1-\frac{(k_{1})^{2}NN^{T}}{2\underline{l}\su^{2}\alpha_{2}}\right)\Eh\left(\int_{t}^{\tau}|Z_s^{\eps}-\bar{Z_s}|^{2}e^{\gamma s}ds\right) \leq  \frac{|l\eps|^{2}e^{\gamma \tau}}{4\underline{l}\su^{2}} +\frac{|l_{2}\eps|^{2}(e^{\gamma \tau}-e^{\gamma t})}{2\gamma\alpha_{1}\underline{l}\su^{2}}.
	\end{aligned}
\end{equation}

The last inequality can be combined with (\ref{supy2}), so that we obtain  
\begin{equation}
	\begin{aligned}
   \Eh\left(\ds\sup_{s\in[t, \tau]}|y_{s} |^{2}e^{\gamma s}\right) & +\left(1-\frac{(k_{1})^{2}NN^{T}}{2\underline{l}\su^{2}\alpha_{2}}-\frac{(k_{1})^{2}NN^{T}}{2\lambda_{2}}\right)\Eh\left(\int_{t}^{\tau}|Z_s^{\eps}-\bar{Z_s}|^{2}e^{\gamma s}ds\right)\\ \leq & \frac{|l\eps|^{2}e^{\gamma \tau}}{4}+\int_{t}^{\tau}\frac{|l_{2}\eps|^{2}(e^{\gamma \tau}-e^{\gamma t})}{2\gamma\lambda_{1}} +\frac{|l\eps|^{2}e^{\gamma \tau}}{4\underline{l}\su^{2}}
                  +\frac{|l_{2}\eps|^{2}(e^{\gamma \tau}-e^{\gamma t})}{2\gamma\alpha_{1}\underline{l}\su^{2}}.
	\end{aligned}
\end{equation}

As a consequence, 
\[\|Y^{\epsilon}-\bar{Y}\|_\gamma:=\Eh\left(\ds\sup_{s\in[t, \tau]}|y_{s} |^{2}e^{\gamma s}\right)\,,\quad \|Z^{\epsilon}-\bar{Z}\|_\gamma:=\Eh\left(\int_{t}^{\tau}|Z_s^{\eps}-\bar{Z_s}|^{2}e^{\gamma s}ds\right)
\]
go to zeros as $\epsilon\rightarrow 0$ at rate $\eps^2$. Since $\|\cdot\|_\gamma$ and $\|\cdot\|_{\gamma=0}$ are equivalent, it follows that $Y_t^{\epsilon}\rightarrow \bar{Y}_t$ uniformly for $t\in[0,T]$, and therefore, as $\eps\to 0$, 
\[
v^\eps(\cdot,x)=Y^\eps\to \bar{Y} =\bar{v}(\cdot,x)
\]
uniformly on any compact subset of $[0,T]\times \R^{n_s}$. Likewise, 
\[
\nabla v^\eps(t,x) = Z_t^{\epsilon}\to \bar{Z}_t=\nabla \bar{v}(t,x)\,, \quad (t,x)\in[0,T]\times \R^{n_s}
\]
as $\eps\to 0$, which implies the convergence of the optimal control in (\ref{xeps})--(\ref{cost1}).
\end{proof}

\begin{remark}
	The theorem also holds if the underlying G-SDE is nonlinear, as long as the averaging principle applies (e.g. when the drift is uniformly Lipschitz).
\end{remark}

\begin{remark}
	When $B=D$ in (\ref{HJBred1}) then the corresponding G-BSDE and the limit G-BSDE simplify to
	\begin{equation}
			Y^\eps_{t} = \frac{1}{2}q_{1}(R_{\tau})-\frac{1}{2}\int_{t}^{\tau} q_0(R_s)\,ds + \frac{1}{2}\int_{t}^{\tau}|Z_s^{\eps}|^{2}\,ds
			- \int_{t}^{\tau}Z_s^{\eps}d\tilde{W}_{s}-(K_{\tau}-K_{t})
	\end{equation}
and
\begin{equation}
		\bar{Y}_{s}  = \frac{1}{2} q_{1}(\bar{R}_{\tau})-\frac{1}{2}\int_{t}^{\tau}q_0(\bar{R}_{s})\,ds + \frac{1}{2}\int_{t}^{\tau}|\bar{Z_s}|^{2}ds - \int_{t}^{\tau}\bar{Z_s}d\tilde{W}_{s} - (\bar{K}_{\tau}-\bar{K}_{t})\,.
\end{equation}
\end{remark}

\section{Numerical illustration}\label{sec:num}

In this section we present two numerical examples to verify that the value function of the original system (\ref{HJBred2}) converges to the solution of the reduced system (\ref{HJBredf}) as $\eps\to 0$. The corresponding fully nonlinear HJB equations (\ref{HJBred2}) and (\ref{HJBredf}) are numerically solved by exploiting the link between fully nonlinear PDE and second-order BSDE (2BSDE); see e.g.  \cite{CSTV07}. The numerical algorithm for solving 2BSDE is based on the deep 2BSDE solver introduced by Beck et al. \cite{Beck2019}.

\subsection{Linear quadratic Gaussian regulator}

The first example is a 2-dimensional linear quadratic regulator problem given by the SDE
\begin{equation}
\label{controledeq2}
dX^{\epsilon}_t = (A^{\epsilon} X^{\epsilon}_t + B^{\epsilon} u^{\epsilon}_t) dt + \sqrt{\sigma} B^{\epsilon} dW_t, \\  X^{\epsilon}_0 = x_0,
\end{equation}
with unknown diffusion coefficient $\sigma \in [\underline{\sigma},\overline{\sigma}]$
and the cost functional
\begin{equation}
\label{bertfuctional2}
J(u;t,x) = \frac{1}{2} \E\left[\int_t^T ((X_s^{\epsilon})^T Q_0 X_s^{\epsilon}+|u^{\epsilon}_s|^2)ds+(X_T^{\epsilon})^T Q_1 X_T^{\epsilon}\right].
\end{equation}
Here $x=(r,u)\in\R^2$ and the coefficients are given by
\[
A^{\epsilon}=\left(
\begin{array}{cc}
	-2 & - 1/\epsilon  \\
	1/\epsilon  & -2 /\epsilon^{2}  \\
\end{array}
\right),\quad B^{\epsilon}=\left(
\begin{array}{c}
	0.1\\
	2/\epsilon  \\
\end{array}
\right), \quad Q_0=0\,,\quad
Q_1=\left(
\begin{array}{c}
	1 \\
	0  \\
\end{array}
\right),
\]
and we define the value function as $v^{\epsilon}(t,x)=\inf_{u}J(u;t,x)$. The G-PDE corresponding to the Gaussian regulator problem \eqref{bertfuctional2}--\eqref{controledeq2} is then given by
\begin{equation}\label{HJB2to1}
    \frac{\partial v^\epsilon}{\partial t}+G(a^{\eps}\colon \nabla^{2}v^\epsilon)+\langle\nabla v^\epsilon, A^\eps x \rangle-\frac{1}{2}{\left<B^\eps,z\right>}^2=0\,,\quad v^\epsilon(T,x)=x_1^2
\end{equation}
where we have used the shorthand $a^\eps=\sigma B^\eps(B^\eps)^T$. Calling $a=\sigma\bar{B}\bar{B}^T$, the G-PDE of the limiting value function $\bar{v}=\lim_{\eps\to 0} v^\eps$ then reads
\begin{equation}\label{HJB2to1lim}
    \frac{\partial \bar{v}}{\partial t}+G(a\colon \nabla^{2}\bar{v}) + \langle\nabla \bar{v}, \bar{A}\bar{x}\rangle-\frac{1}{2}\langle\nabla \bar{v},\bar{B}\rangle^{2}=0\,,\quad \bar{v}(T,r)=r\,.
\end{equation}
The function $G\colon \R\to  \R$ is defined by:
\begin{equation*}
   G(x) =\frac{x}{2}\left\{
         \begin{array}{ll}
           \bar{\sigma} \, \, \textsf{if} \, x\geq0 \\
          \underline{\sigma} \,\, \textsf{if} \, x<0\,.
         \end{array}
       \right.
\end{equation*}

\subsection*{Numerical results}

We consider the two value functions in the time interval $[0,0.1]$ with fixed initial condition $x=(r,u)=(1,0.5)$. For the diffusion coefficient, we assume $\sigma\in[0.8,1]$. The driver $f$ of the 2BSDE corresponding to the {G}-PDE \eqref{HJB2to1} of the original system is
\[
f(t,x,y,z,S)= G(a^\eps\colon S) + \frac{1}{2}{\left<B^\eps,z\right>}^2+\frac{1}{2}\left<A^\eps x,z\right>,
\]
whereas the 2BSDE corresponding to the limiting {G}-PDE \eqref{HJB2to1lim} has the driver
\[\bar{f}(t,x,y,z,\bar{S})= G(a\colon\bar{S})  +\frac{1}{2}{\left<\bar{B},z\right>}^2+\frac{1}{2}\left<\bar{A}x,z\right>.
\]
We compare $v^\epsilon (0,x)$ and $\bar{v}(0,r)$ and call
\[
\delta_v(\eps) = |v^\epsilon (0,x) - \bar{v}(0,r)|\,,
\]
Denoting by $u^\eps$ and $u$ the corresponding optimal controls for any given noise coefficient $\sigma$ (that can expressed in terms of the value function for fixed $\sigma$), we have
\[
v^\epsilon (0,x)=\Eh\left[\int_0^T |u_s^\epsilon|^2 ds\right]\,,\quad \bar{v}(0,r)=\Eh\left[\int_0^T |u_s|^2 ds\right]\,.
\]
The simulation results are shown in the following table:

\begin{tabular}{|p{3cm}|p{2cm}|p{2cm}|p{2cm}|}
 \hline
 $\epsilon$ & 0.3 & 0.2  & 0.1 \\
 \hline
 $\delta_v$  & 0.15  & 0.06  & 0.01  \\
\hline
\end{tabular}

\subsection{Triad system for climate prediction} We consider a stochastic climate model which can be represented as a bilinear system with additive noise  \cite{MTV99}
\begin{equation}
\label{2to3Exa}
dX^{\epsilon}(t) = \frac{1}{\epsilon^2}L(X^\epsilon(t))dt  + \frac{1}{\epsilon}B(X^{\epsilon}(t),X^\epsilon(t)) dt  + \frac{1}{\epsilon}\Sigma\,dW_t\,, \quad  X^{\epsilon}(0) = x,
\end{equation}
where $X^\epsilon(t)=(R_1^\epsilon(t), R_2^\epsilon(t), U^\epsilon(t))\in\R^3$ and
\[
 L(x) = -\begin{pmatrix}
	0 \\ 0 \\
	u
\end{pmatrix}, \quad
B(x,x) = \begin{pmatrix}
        A_1 r_2 u \\ A_2 r_1 u\\ A_3 r_1 r_2
      \end{pmatrix}, \quad
   \Sigma=\begin{pmatrix}
           0 \\ 0 \\ \lambda
          \end{pmatrix},
\]
where $0<\epsilon\ll 1$, and $A_1,A_2,A_3$ are real numbers such that
\[
A_1+A_2+A_3 = 0\,,
\]
and
\[
\lambda\in[\underline{\sigma},\overline{\sigma}]\,
\]
is the unknown diffusion coefficient. Equation (\ref{2to3Exa}), which is a time rescaled version of (\ref{homx})--(\ref{homy}), is a simplified stochastic turbulence model that comprises triad wave interactions between two climate variables $r_1,\,r_2$ and a single stochastic variable $u$.

\begin{figure}
	\begin{center}
		\includegraphics[width=0.695\textwidth]{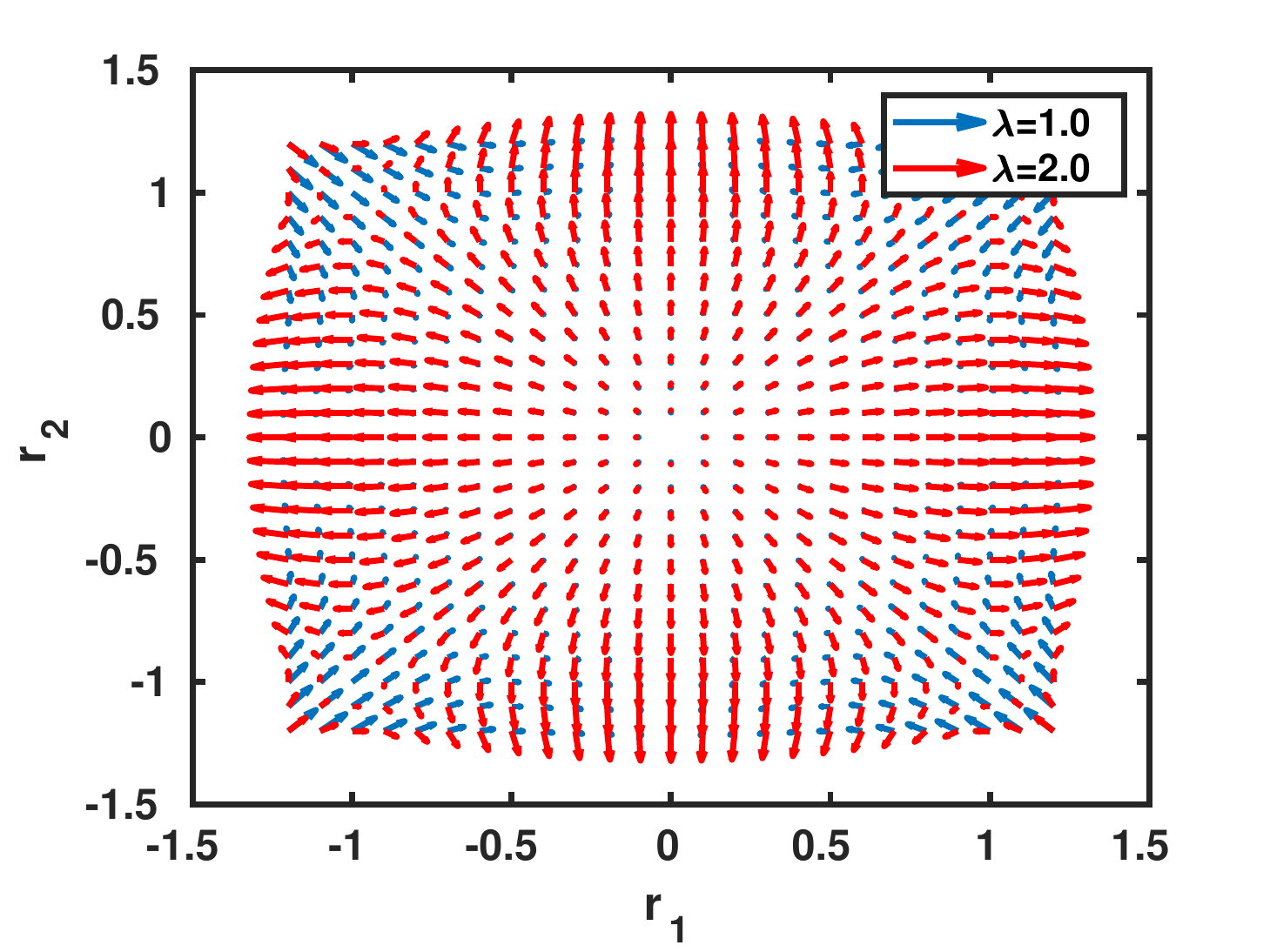}
		\caption{Vector field $f$ of the limit triad system for $A_1=A_2=1$ and $A_3=-2$ and two different noise parameters $\lambda$.}
		\label{fig:triad1}
	\end{center}
\end{figure}

The noise level $\lambda$  cannot be accurately estimated, nevertheless it may have a huge impact on the dynamics, even though there are no bifurcations for $\lambda>0$. Equation (\ref{2to3Exa}) can thus be considered an SDE driven by a G-Brownian motion. It is shown in \cite{MTV99} that, for any finite value $\lambda>0$, the first two components $R^\epsilon=(R_1^{\epsilon},R^\epsilon_2)$ converge strongly in $L^p$ for $p=1,2$ and on any bounded time interval $[0,T]$ to the solution of the nonlinear SDE with multiplicative noise
\begin{equation}
\label{2to3lim}
dR(t) = f(R(t)) dt + \sigma(R(t))  dW_t\,,\quad  R(0) = r,
\end{equation}
where $R(t) = (R_1(t), R_2(t))$ and
\[
f(r) =
      \begin{pmatrix}
        A_1 r_1 (A_3 r_2^2 +\frac{\lambda^2}{2}A_2) \\
         A_2 r_2 (A_3r_1^2+\frac{\lambda^2}{2}A_1)
      \end{pmatrix}, \quad
  \sigma(x)=\frac{\lambda}{\gamma}\begin{pmatrix}
  A_1r_2 \\ A_2r_1 \end{pmatrix}.
\]
The pathwise convergence $R^\eps\to R$ together with the stability result of Zhang and Chen  \cite[Thm.~3.1]{zhang11} implies that
\[\hat{\mathbb{E}}(\sup_{t\in[0, T]}|R^{\epsilon}(t)-R(t)|)\to 0\quad \textrm{as} \quad
\epsilon\to 0
\]
We can study the qualitative features of the triad system (\ref{2to3Exa}) in terms of the reduced model (\ref{2to3lim}). Using It\^o's formula, it readily follows that
\[
I(r_1,r_2) = A_1r_2^2 - A_2 r_1^2
\]
is a conserved quantity for both the reduced and the original system. We consider the case $A_1,\,A_2>0$ and $A_3<0$, in which case the level sets of $I$ are hyperbola, and the origin is an unstable hyperbolic equilibrium. The rays that connect the origin with any of the four equilibria
\[
r^*_{\pm,\pm} = \left(\pm\sigma\sqrt{\frac{A_1}{2|A_3|}},\,\pm\sigma\sqrt{\frac{A_2}{2|A_3|}}\right)\,,\quad A_1,\,A_2>0\,.
\]
are (locally hyperbolically unstable) invariant sets. Figure \ref{fig:triad1} shows representative vector fields $f$ of the limit system for different noise coefficients $\lambda=1.0$ and $\lambda=2.0$, when $A_1=A_2>0$. It can be seen that the repulsive and attractive regions on the invariant diagonals change as the coefficient $\lambda$ varies.

For illustration, Figure \ref{fig:triad2} shows three representative samples of $R(0.5)$ for $A_1=0.75$, $A_2=0.25$ and $A_3=-1.0$, with $\lambda=1.0$,  $\lambda=1.5$ and $\lambda=1.0$, all starting from the same initial value $R(0)=(1,-2)$. Note that the sample means over 100 independent realisations each depend on $\lambda$ in a non-trivial fashion.

   \begin{figure}
   	\begin{center}
   		\includegraphics[width=0.695\textwidth]{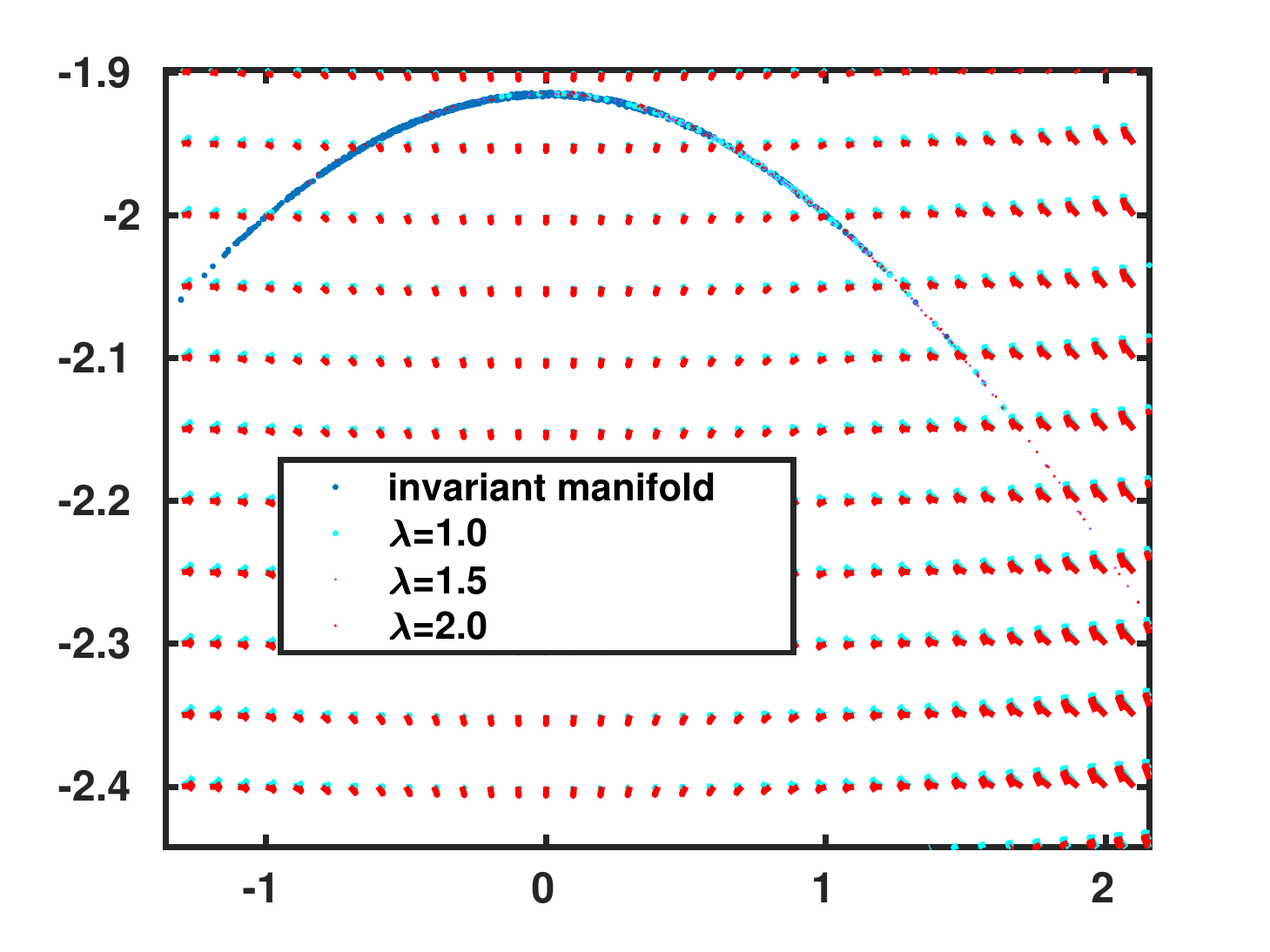}
   		\caption{Independent realisations of the limit triad system for $A_1=A_2=1$ and $A_3=-2$ and different noise parameters $\lambda\in[1,\,2]$ and fixed $T=0.5$. For every parameter value, we have generated 100 independent realisations, all starting from the same initial value $r=(1,-2)$. Note that the invariant manifolds, to which the trajectories are confined, are independent of $\lambda$, nevertheless the dynamics on the invariant manifolds are different.}
   		\label{fig:triad2}
   	\end{center}
   \end{figure}

\subsection*{Goal-oriented uncertainty quantification}

We now compare the full triad system (\ref{2to3Exa}) and the limit system (\ref{2to3lim}) for a specific quantity of interest (QoI) using the G-BSDE framework. To this end, we consider the QoI \emph{mean}
\begin{equation}
	v^\epsilon(t,x)=\mathbb{E}_{t,x}(X_1^\epsilon(T))\,,\quad v(t,r)=\mathbb{E}_{t,r}(R_1(T))
\end{equation}
as a function of the initial data $(t,x)$ and $(t,r)$ where  $x=(r,u)=(r_1,r_2,u)$ and $T>0$ is fixed. By definition,  the two value functions $v^\epsilon$
and $v$ solve the following nonlinear dynamic programming (HJB-type) equations
\begin{equation}\label{HJB2to3}
    \frac{\partial v^\epsilon}{\partial t} + G(a^{\eps}\colon\nabla^{2}v^\epsilon)+\langle\nabla v^\epsilon, b^\eps \rangle=0\,,\quad v^\epsilon(T,x)=x_1
\end{equation}
and
\begin{equation}\label{HJB2to3lim}
	\frac{\partial v}{\partial t}+G(a\colon \nabla^{2}v + \langle\nabla v, f_1 \rangle)+\langle\nabla v, f_2 \rangle = 0\,, \quad v(T,r)=r_1\,,
\end{equation}
with the shorthands
\[
b^\eps=\frac{1}{\eps^2}L+\frac{1}{\eps}B\,,\; a^\eps=\frac{1}{\eps^2}\Sigma\Sigma^T\,,\; a = \sigma\sigma^T\,, \; f_1 = \lambda^2A_1A_2 r\,,\; f_2=f-\frac{f_1}{2}\,.
\]
The nonlinearity $G$ in (\ref{HJB2to3}) and (\ref{HJB2to3lim}) is defined by
\begin{equation*}
   G(x) =\frac{x}{2}\left\{
         \begin{array}{ll}
           \overline{\sigma}  \, \, \textsf{if} \, x\geq0 \\
          \underline{\sigma}  \,\, \textsf{if} \, x<0
         \end{array}
       \right.
\end{equation*}
(We can think of $G$ as the nonlinear generator of the parameter-dependent part of the corresponding G-SDE.) We solve the fully nonlinear HJB equations by exploiting the aforementioned relation to second-order BSDE (2BSDE) and using the deep learning approximation developed by Beck et al. \cite{Beck2019}.

%
%

\subsection*{Numerical results}

As a first example, we consider the triad system and its homogenisation limit, with the parameters $ A_1=A_2=1,\,A_3=-2$ and $\lambda\in[0.8,\,1.2]$. Setting $T=0.1$ and $x=(r,u)=(1,-2,-2)^T$ the 2BSDE solution for $\epsilon=0.2$ yields the numerical approximations $v^\epsilon(0,x)=0.9291$ and $v(0,r)=0.9326$, i.e.
\[
\frac{|v^\epsilon(0,x)-v(0,r)|}{v(0,r)} = 0.0038
\]
in agreement with the theoretical prediction. We repeated the 2BSDE simulation for the same initial data and $\eps=0.2$, but with the different set of parameters $A_1=1,A_2=2,A_3=-3$, $\lambda\in[0.6,\,1.2]$ and $T=0.5$, and found $v^\epsilon(0,x)=1.3202$ and the limiting PDE $v(0,r)=1.3549$, i.e.
\[
\frac{|v^\epsilon(0,x)-v(0,r)|}{v(0,r)} = 0.0256\,.
\]

It is  illustrative to consider the parameter for which the maximum in the nonlinear part $G$ of the generator is attained. For example, for the original triad system,
\begin{equation}\label{Gmax}
G(a^{\eps}\colon\nabla^{2}v^\epsilon) = \max_{\lambda\in[\underline{\sigma},\overline{\sigma}]} a^{\eps}(\lambda)\colon\nabla^{2}v^\epsilon  = \frac{1}{\eps^2}\max_{\lambda\in[\underline{\sigma},\overline{\sigma}]} \lambda\frac{\partial^2 v^\epsilon}{\partial u^2}\,,
\end{equation}
which is identically equal to $\underline{\sigma}$ if $v^\eps$ is strictly concave in its third argument, $u$, and equal to $\overline{\sigma}$ if it is strictly convex in $u$. For a G-PDE of the form (\ref{HJB2to3}) that contains no running cost, one can show that the value function is strictly convex or concave if the terminal condition is strictly convex or concave (since the solution of the forward SDE is a strictly increasing function of the initial value).
In general, however, it is not the convexity that determines, for which parameter value the maximum is attained, as the limit G-PDE (\ref{HJB2to3lim}) shows. In fact, the optimal parameter will be a feedback function that depends on $(t,x)$ or $(t,r)$.

Figure \ref{fig:sigma} shows the maximiser in (\ref{Gmax}) as function of $t$ for a fixed value of $x$. It can be seen that the optimal parameter value is time-dependent, which underpins the fact that the optimal parameter depends on the QoI (here also through the initial data) in a nontrivial way; cf. Figure \ref{fig:triad2}.

\begin{figure}
	\begin{center}
		\includegraphics[width=0.695\textwidth]{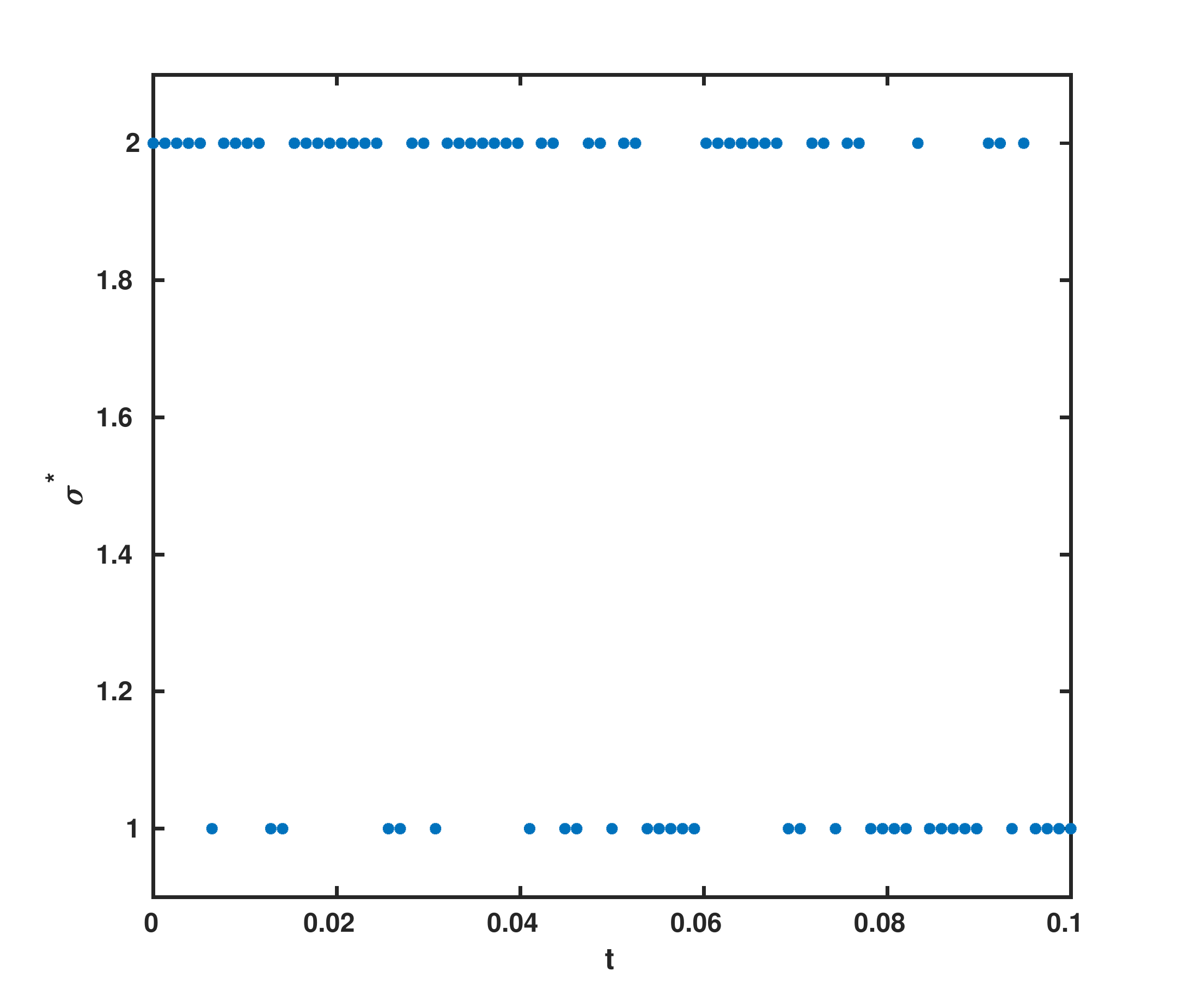}
		\caption{The plot shows the parameter $\sigma^*$ that maximises the nonlinear part $G$ of the generator in (\ref{Gmax}) for fixed initial condition over the noise coefficient $\lambda\in[1,2]$ .}
		\label{fig:sigma}
	\end{center}
\end{figure}

As a final numerical test, we consider the triad system with $A_1 = 0.75, A_2 = 0.25, A_3 = -1$ and $\lambda\in[1,\,2]$. For $T=0.1$ and $\eps=0.2$ we obtain $v^\epsilon(0,x)=0.9752$ and the limiting PDE $v(0,r)=0.9601$, i.e.
\[
\frac{|v^\epsilon(0,x)-v(0,r)|}{v(0,r)} = 0.0157\,.
\]

\section{Conclusions}\label{sec:sum}

We have sketched a general framework for goal-oriented uncertainty quantification and model reduction of parametric multiscale diffusions. The framework is based on the notion of sublinear G-expectations and the related G-Brownian motion. The sublinear expectation framework allows to define worst-case scenarios for any given, possibly path-dependent quantity of interest (QoI), and we have proved pathwise convergence of the corresponding G-BSDE for the case when the multiscale system depends on a small parameter that can be sent to zero.

Given the rather restrictive assumptions in this paper, it can only serve as a starting point for further studies. For example, we have assumed that the unknown parameters are from a compact set, and it would be desirable to allow for unbounded parameters. Since the nonlinear generator of the underlying G-Brownian motion may not be unambigously defined then, this calls for a suitable regularisation that is likely to have a Bayesian interpretation that may open up new algorithmic possibilities to quantify the uncertainty in the reduced system. Another obvious extension of this study is a formulation of the G-BSDE of the reduced system in a purely data-driven fashion using simulation data from the original model (rather than its value function). This will lead to a tracking-type functional for the QoI that enters the G-BSDE that determines the worst-case parameter(s) for the reduced model. Finally, in the combination with controlled systems, questions regarding the communtativity of the control optimisation with the nonlinear expectation remain to be addressed. All this questions will be adressed in forthcoming papers.

\section*{Acknowledgement}

This work has been partially supported by the Collaborative Research Center \emph{Scaling Cascades in Complex Systems} (DFG-SFB 1114) through project A05 and by the MATH+ Cluster of Excellence (DFG-EXC 2046) through the projects EP4-4 and EF4-6. Hafida Bouanani gratefully acknowledges funding by ATRST (Algeria).

\appendix

\section{Nonlinear expectation}\label{sec:prelim}

To begin with, we fix the notation and review the fundamentals of G-Brownian motion and the related nonlinear expectation \cite{geng14}.

Let $\Omega=C_0([0,\infty))$ denote the the space of real valued continuous functions  $(\omega_{t})_{t\ge 0}$ with the property $\omega_0=0$. We denote by $C_{ {\rm b,lip}}(\R^{d})$ the space of bounded and  Lipschitz continuous functions on $ \R^{d}$ and, for every $T>0$, we define
\[L_ {\rm ip}(\Omega_{T})=\left\{\varphi(B_{t_{1}}, \ldots, B_{t_{n}}): n\geq1,  t_{1}, \ldots, t_{n}\in[0, T],
\varphi\in C_{ {\rm b,lip}}( \R^{d\times n})\right\}
\]
and
\[
L_ {\rm ip}(\Omega) = \bigcup_{T=0}^\infty L_ {\rm ip}(\Omega_{T}).
\]
We call the corresponding Banach space
\[
L(\Omega)=(L_ {\rm ip}(\Omega),\|\cdot\|_{\infty})
\]
the Lipschitz space on $\Omega$ and define $L^p(\Omega)=\{X\in L(\Omega)\colon |X|^p\in L(\Omega)\}$.  Following Peng \cite{Peng07}, a sublinear expectation (also called: G-expectation) is a functional $\hat{\mathbb{E}}(\cdot)\colon  L_{\rm ip}(\Omega)\to \R$ with the following properties:
\begin{enumerate}
	\item $\hat{\mathbb{E}}(X)\geq\hat{\mathbb{E}}(Y)$ if $X\geq Y$ (\emph{monotonicity})
	\item $\hat{\mathbb{E}}(l)=l$ for every $l\in \R$ (\emph{preservation of constant})
	\item $\hat{\mathbb{E}}(X+Y)\leq\hat{\mathbb{E}}(X)+\hat{\mathbb{E}}(Y)$ (\emph{sub-additivity})
	\item $\hat{\mathbb{E}}(\lambda X)=\lambda\hat{\mathbb{E}}(X)$ for all $\lambda\geq 0$  (\emph{positive homogeneity}).
\end{enumerate}
The triple $(\Omega, L_{\rm ip}(\Omega), \hat{\mathbb{E}})$ is called a sublinear expectation space. The sublinear expectation admits a variational representation in terms of a family $\{\E_P\colon P\in \mathcal{P}\}$ of linear expectations where $\mathcal{P}$ is a family of probability measures on $(\Omega,\cB(\Omega))$:
\begin{equation}\label{Gexpec}
	\hat{\E}(X) = \max_{P\in\mathcal{P}}\E_P(X)\,,\quad X\in L_{\rm ip}(\Omega)
\end{equation}

The corresponding canonical process $(B_{t})_{t\geq 0}$ on the sublinear expectation space $(\Omega, L_{\rm ip}(\Omega), \hat{\mathbb{E}})$ is called a \textbf{G-Brownian motion} and is characterized as follows:
\begin{definition}
	A $d$-dimensional process $(B_{t})_{t\geq0}$ is called a \textit{G-Brownian motion} under the  sublinear expectation $\hat{\mathbb{E}}$ if the following properties hold:
	\begin{enumerate}
		\item $B_{0}(\omega)=0$.
		\item For every $t,s \geq 0$ and any $n\in \mathbb{N}$, the increment $B_{t+s}-B_{t}$ is independent of  of the collection $(B_{t_{1}}, B_{t_{2}}, \ldots, B_{t_{n}})$ of random variables,
		$0 \leq t_{1}\leq \ldots \leq t_{n} \leq t$ where two random vectors $X,Y$ are independent if for all bounded and Lipschitz continuous test functions $\varphi$
		\[
		\hat{\E}(\varphi(X,Y)) = \hat{\E}(\hat{\E}(\varphi(x,Y)|x=X)
		\]
		\item For every $t,s \geq 0$, the increment $B_{t+s}-B_{t}$ is normally distributed with mean 0 and covariance $s\Sigma$ where $\Sigma\in\R^{d\times d}$ is a symmetric and positive semidefinite matrix that is independent of $s$ or $t$.
	\end{enumerate}
\end{definition}

An interesting property of the G-Brownian motion is that its quadratic variation $\langle B\rangle$ has almost the same properties as the G-Brownian motion itself: $\langle B\rangle_0=0$, the increment $\langle B\rangle_{t+s} -\langle B\rangle_{t}$  is independent of the collection
\[
(\langle B\rangle_{t_{1}}, \langle B\rangle_{t_{2}}, \ldots, \langle B\rangle_{t_{n}})\,, \quad 0 \leq t_{1}\leq \ldots \leq t_{n} \leq t
\]
of quadratic variations, and $\langle B\rangle_{t+s} -\langle B\rangle_{t}\stackrel{d}{=}\langle B\rangle_{s}$.

\subsection{Path properties of G-Brownian motion}

We will need the following inequalities in the G-framework, all of which have straighforward interpretations in terms of the standard Brownian motion under the linear expectation. We define
\[
\mathit{M}^{p}([0,T])=\left\{\eta_{t}=\displaystyle\sum_{i=0}^{N-1}\xi_{i}\mathbbm{1}_{\{t_{i},
	t_{i+1}\}}: 0=t_{0}<\ldots<t_{N}=T, \xi_{i}\in L^p(\Omega_{t_{i}})\right\}
\]
to be the space of simple processes on $[0,T]$ and call $\mathit{M}_{G}^{p}([0,T])$ the completion of $\mathit{M}^{p}([0,T])$ under the norm
\[
\|\eta\|_{M,p}=\left(\hat{\mathbb{E}}\left[\int_{0}^{T}|\eta_{s}|^{p}ds\right]\right)^{\frac{1}{p}}
\]
and $\mathit{H}_{G}^{p}([0,T])$ the completion of $\mathit{M}^{p}([0,T])$ under the norm
\[
\|\eta\|_{H,p}=\left(\hat{\mathbb{E}}\left[\left(\int_{0}^{T}|\eta_{s}|^{2}ds\right)^{\frac{p}{2}}\right]\right)^{\frac{1}{p}}\,.
\]
In the following, let $B$ be a G-Brownian motion, with
\[
\underline{l}^2 = -\hat{\E}(-|B_1|^2)\le \hat{\E}(|B_1|^2)=\bar{l}^2 \,.
\]

\begin{proposition}\emph{(It\^{o} isometry inequality, \cite[Prop. 6.4]{Peng07})}\label{isometry} 
	Let $\beta\in M_{G}^{p}([0,T])$ for some $p\geq 2$. Then
	\[\int_{0}^{T}\beta_{t}dB_{t}\in L^{p}(\Omega_{T})
	\]
	and
	\begin{equation}\label{BDG1}
		\displaystyle\hat{\mathbb{E}}\left(\left|\int_{0}^{T}\beta_{t}dB_{t}\right|^{p}\right)\leq C_{p}\hat{\mathbb{E}}\left(\left|\int_{0}^{T}\beta_{t}^{2}d\langle B\rangle_{t}\right|^{\frac{p}{2}}\right),
	\end{equation}
	for some $C_p>0$.
	
\end{proposition}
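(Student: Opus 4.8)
This is the one-sided Burkholder--Davis--Gundy, or It\^{o}-isometry-type, inequality for the G-stochastic integral, and the plan is to prove it first for simple integrands and then pass to the general case by density. First I would take $\beta\in M^{p}([0,T])$ a simple process, $\beta_{t}=\sum_{i=0}^{N-1}\xi_{i}\mathbbm{1}_{[t_{i},t_{i+1})}$ with $\xi_{i}\in L^{p}(\Omega_{t_{i}})$, and note that for $p=2$ the inequality is in fact the familiar isometry identity $\hat{\E}\big((\int_{0}^{T}\beta_{t}\,dB_{t})^{2}\big)=\hat{\E}\big(\int_{0}^{T}\beta_{t}^{2}\,d\langle B\rangle_{t}\big)$, which serves as the base case. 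Since, by definition, $M_{G}^{p}([0,T])$ is the completion of $M^{p}([0,T])$ under $\|\cdot\|_{M,p}$ and the stochastic integral extends as a bounded operator once (\ref{BDG1}) is known on the dense subset, the general statement follows by approximating $\beta$ by simple processes and passing to the limit, using sub-additivity and continuity of $\hat{\E}$ on $L^{p}(\Omega_{T})$ together with a Fatou-type lower bound for the right-hand side.

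For the core estimate with $p>2$ and simple $\beta$, set $M_{t}=\int_{0}^{t}\beta_{s}\,dB_{s}$, a symmetric G-martingale with $M_{0}=0$ and quadratic variation $\langle M\rangle_{t}=\int_{0}^{t}\beta_{s}^{2}\,d\langle B\rangle_{s}$. Since $x\mapsto|x|^{p}$ is $C^{2}$ for $p\ge2$, I would apply the G-It\^{o} formula to $|M_{t}|^{p}$, which gives
\[
|M_{T}|^{p}=p\int_{0}^{T}|M_{s}|^{p-2}M_{s}\,dM_{s}+\frac{p(p-1)}{2}\int_{0}^{T}|M_{s}|^{p-2}\,d\langle M\rangle_{s}.
\]
After a localisation that guarantees the integrand $|M_{s}|^{p-2}M_{s}\beta_{s}$ lies in $M_{G}^{2}$, the first term on the right is a symmetric G-martingale with vanishing sublinear expectation, so taking $\hat{\E}$ and using sub-additivity and monotonicity yields
\[
\hat{\E}\big(|M_{T}|^{p}\big)\le\frac{p(p-1)}{2}\,\hat{\E}\Big(\,\sup_{0\le s\le T}|M_{s}|^{\,p-2}\int_{0}^{T}\beta_{s}^{2}\,d\langle B\rangle_{s}\Big).
\]
I would then apply H\"older's inequality for $\hat{\E}$ with conjugate exponents $\tfrac{p}{p-2}$ and $\tfrac{p}{2}$ to factor the right-hand side as $\hat{\E}\big(\sup_{s}|M_{s}|^{p}\big)^{(p-2)/p}\,\hat{\E}\big((\int_{0}^{T}\beta_{s}^{2}\,d\langle B\rangle_{s})^{p/2}\big)^{2/p}$, invoke the G-Doob maximal inequality $\hat{\E}(\sup_{s\le T}|M_{s}|^{p})\le c_{p}\hat{\E}(|M_{T}|^{p})$, and then divide through by the (finite, thanks to the localisation) quantity $\hat{\E}(|M_{T}|^{p})^{(p-2)/p}$ — or use Young's inequality to absorb it — to obtain (\ref{BDG1}); removing the localisation by monotone convergence and re-applying the maximal inequality recovers the estimate, even with a supremum inside the expectation on the left.

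The main obstacle is that $\hat{\E}$ is only sub-additive and $\langle B\rangle$ is genuinely random, with $\underline{l}^{2}(t-s)\le\langle B\rangle_{t}-\langle B\rangle_{s}\le\bar{l}^{2}(t-s)$ rather than deterministic; every place where the classical proof would use linearity of the expectation — in particular, discarding the stochastic-integral term and controlling the localisation of $M$ — must instead be justified through the defining property of \emph{symmetric} G-martingales ($\hat{\E}(N_{T})=-\hat{\E}(-N_{T})=0$) and through monotonicity and $L^{p}$-continuity of $\hat{\E}$. The most delicate technical point is verifying, after truncation, that $|M_{s}|^{p-2}M_{s}\beta_{s}$ is admissible as a G-integrand so that the martingale term indeed drops out; this is the reason the argument is first carried out for simple (hence bounded) integrands and only afterwards extended by the density argument.
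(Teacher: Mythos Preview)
The paper does not actually prove this proposition: it is recorded in Appendix~\ref{sec:prelim} as a quoted result from Peng's monograph \cite[Prop.~6.4]{Peng07} and carries no proof in the paper itself. There is therefore nothing to compare your attempt against.

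On its own merits, your sketch is the standard route to the one-sided BDG inequality in the $G$-framework and is essentially correct: apply the $G$-It\^{o} formula to $|M_t|^{p}$, use that the stochastic-integral term is a symmetric $G$-martingale so its $\hat{\E}$-expectation vanishes, then combine sublinear H\"older with the $G$-Doob maximal inequality and absorb the resulting factor of $\hat{\E}(|M_T|^{p})^{(p-2)/p}$. The technical caveats you raise --- localisation to ensure the integrand $|M_s|^{p-2}M_s\beta_s$ is admissible, and the fact that only sub-additivity (not linearity) is available --- are exactly the points that need care. One remark: the $G$-Doob maximal inequality you invoke is itself a nontrivial result in the sublinear setting (it is not a trivial consequence of Jensen and the optional sampling theorem as in the linear case), so in a fully self-contained write-up you would need to cite or prove it separately; but as a proof outline your argument is sound and is, in fact, how the cited reference proceeds.
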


\begin{proposition}\emph{(Burkholder-Davis-Gundy inequality, \cite[Prop. 2.6]{HJPS12})}\label{GBSDE} 
	For each $\eta \in H_G^{\alpha}([0,T])$ for some $\alpha\geq 1$ and $p\in(0,\alpha]$, we have
	\begin{equation}\label{BDG2}
		 \underline{l}^{p}c_{p}\displaystyle\hat{\mathbb{E}}\left[\left(\int_{0}^{T}\eta_{s}^{2}ds\right)^{\frac{p}{2}}\right]\leq\displaystyle\hat{\mathbb{E}}\left[\sup_{t\in[0, T]}\left|\int_{0}^{t}\eta_{s}dB_{s}\right|^{p}\right]\leq\bar{l}^{p}C_{p}\displaystyle\hat{\mathbb{E}}\left[\left(\int_{0}^{T}\eta_{s}^{2}ds\right)^{\frac{p}{2}}\right]
	\end{equation}
	where $0<c_{p}<C_{p}<\infty$ are constants.
	
\end{proposition}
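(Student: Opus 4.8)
The plan is to deduce the stated bound from its \emph{intrinsic} counterpart, in which the Lebesgue integral $\int_0^T \eta_s^2\,ds$ is replaced by the quadratic-variation integral $\int_0^T \eta_s^2\,d\langle B\rangle_s$, and then to exploit the characteristic two-sided bound
\[
\underline{l}^2\,ds \;\le\; d\langle B\rangle_s \;\le\; \bar{l}^2\,ds
\]
that the quadratic variation of a one-dimensional G-Brownian motion satisfies quasi-surely (see \cite{Peng07}). Concretely, I would first establish the intrinsic inequality
\begin{equation}\label{intrinsicBDG}
  c_p\,\hat{\mathbb{E}}\!\left[\Big(\int_0^T \eta_s^2\,d\langle B\rangle_s\Big)^{p/2}\right]
  \;\le\; \hat{\mathbb{E}}\!\left[\sup_{t\in[0,T]}\Big|\int_0^t \eta_s\,dB_s\Big|^p\right]
  \;\le\; C_p\,\hat{\mathbb{E}}\!\left[\Big(\int_0^T \eta_s^2\,d\langle B\rangle_s\Big)^{p/2}\right],
\end{equation}
with the same constants $0<c_p<C_p<\infty$, and then convert \eqref{intrinsicBDG} into \eqref{BDG2}.

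The intrinsic inequality \eqref{intrinsicBDG} I would prove first for simple integrands $\eta\in M^p([0,T])$ and extend by density in the $H_G^p$-norm. For the upper bound, the idea is to apply It\^o's formula in the G-framework to $|M_t|^p$, where $M_t=\int_0^t\eta_s\,dB_s$ is a G-martingale; the resulting second-order term is governed by $\int \eta_s^2\,d\langle B\rangle_s$, and combining this with the It\^o isometry inequality of Proposition~\ref{isometry} together with a Doob-type maximal inequality for G-martingales yields the right-hand side of \eqref{intrinsicBDG} (directly for $p\ge 2$, and for $1\le p<2$ after a localisation plus Young argument). For the lower bound one runs the estimate in reverse, bounding $\int_0^T\eta_s^2\,d\langle B\rangle_s$ by the running supremum of $M$ plus a symmetric G-martingale remainder whose sublinear expectation is controlled.

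The passage from \eqref{intrinsicBDG} to \eqref{BDG2} is then purely order-theoretic. Since $x\mapsto x^{p/2}$ is nondecreasing, the sandwich bound yields, quasi-surely,
\[
\underline{l}^p\Big(\int_0^T\eta_s^2\,ds\Big)^{p/2}
\;\le\;\Big(\int_0^T\eta_s^2\,d\langle B\rangle_s\Big)^{p/2}
\;\le\;\bar{l}^p\Big(\int_0^T\eta_s^2\,ds\Big)^{p/2},
\]
and applying the monotonicity and positive homogeneity of $\hat{\mathbb{E}}$ (properties (1) and (4) of the sublinear expectation) to these pathwise inequalities turns \eqref{intrinsicBDG} into exactly \eqref{BDG2}, with the advertised constants $\underline{l}^p c_p$ on the left and $\bar{l}^p C_p$ on the right.

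The hard part will be the analytical content of \eqref{intrinsicBDG} in the low-exponent range $p\in(0,1)$, where Doob's maximal inequality is no longer available and one must replace it by a Davis-type good-$\lambda$ or distributional argument adapted to the sublinear setting, carried out uniformly over the representing family $\mathcal{P}$ of measures in \eqref{Gexpec}. A secondary difficulty is the density extension itself: because the $H_G^p$- and $M_G^p$-norms are genuinely different, one has to ensure that both the stochastic integral $\int\eta\,dB$ and the quadratic-variation integral $\int\eta^2\,d\langle B\rangle$ converge simultaneously along the approximating sequence of simple processes, which hinges on the continuity of these maps in the relevant G-norms.
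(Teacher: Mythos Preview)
The paper does not contain a proof of this proposition at all: it is stated in the appendix as a quotation from \cite[Prop.~2.6]{HJPS12} and used as a black box in the proof of Theorem~\ref{theo}. There is therefore nothing to compare your proposal against.

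That said, your outline is a reasonable sketch of how the cited result is actually obtained in the G-framework: one first proves the intrinsic BDG inequality with $d\langle B\rangle_s$ in place of $ds$ (this is the version that follows from the classical BDG inequality applied under each $P\in\mathcal{P}$ in the representation \eqref{Gexpec}, since the constants $c_p,C_p$ are universal and independent of the measure), and then uses the quasi-sure sandwich $\underline{l}^2\,ds\le d\langle B\rangle_s\le \bar{l}^2\,ds$ together with monotonicity of $\hat{\mathbb{E}}$ to pass to the Lebesgue form. One remark: you frame the intrinsic inequality as requiring a direct It\^o-formula argument under $\hat{\mathbb{E}}$, but the simpler and standard route is to observe that under every $P\in\mathcal{P}$ the process $\int_0^\cdot\eta_s\,dB_s$ is a local martingale with bracket $\int_0^\cdot\eta_s^2\,d\langle B\rangle_s$, apply the classical BDG inequality under $P$, and then take the supremum over $\mathcal{P}$; this handles all $p>0$ uniformly and avoids the low-exponent difficulty you flag.
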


\begin{proposition}\emph{(Isometry inequality, \cite[Lemma 2.19]{Lin})}\label{Bai}
	Let $p\geq1, \eta\in M_{G}^{p}([0, T])$ and $0\leq s\leq t\leq T$. Then
	\begin{equation}\label{E}
		\hat{\mathbb{E}}\left(\displaystyle\sup_{s\leq u\leq t}\left|\int_{s}^{u}\eta_{r}d\langle B\rangle_{r}\right|^{p}\right)\leq \left(\frac{\underline{l}+\bar{l}}{4}\right)^{p}(t-s)^{p-1}\hat{\mathbb{E}}\left(\int_{s}^{t}|\eta_{u}|^{p}du\right).
	\end{equation}
	
\end{proposition}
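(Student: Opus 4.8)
The plan is to reduce the sublinear estimate to a pathwise estimate valid for each probability measure in the representing family $\mathcal{P}$, then take the supremum. First I would recall that the quadratic variation $\langle B\rangle$ of a G-Brownian motion is, under each $P\in\mathcal{P}$, an absolutely continuous increasing process whose density $d\langle B\rangle_r/dr$ takes values in the interval $[\underline{l}^2,\bar{l}^2]$ determined by $\underline{l}^2=-\hat{\mathbb{E}}(-|B_1|^2)$ and $\bar{l}^2=\hat{\mathbb{E}}(|B_1|^2)$; in fact one has the sharper statement that $\langle B\rangle_t-\langle B\rangle_s$ lies between $\underline{l}^2(t-s)$ and $\bar{l}^2(t-s)$ in the appropriate sense. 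The constant $(\underline{l}+\bar{l})/4$ appearing in the claim strongly suggests that the bound is obtained by symmetrising: one writes $d\langle B\rangle_r = \frac{1}{2}(\underline{l}^2+\bar{l}^2)\,dr + (\text{centred part})$, or more likely uses the elementary bound that the density is controlled by $((\underline{l}+\bar{l})/2)^2/\ldots$, so tracking the exact algebraic form of the constant is where care is needed.

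The core step is a pathwise application of Hölder's (or Jensen's) inequality for the Lebesgue--Stieltjes integral against $d\langle B\rangle_r$. For fixed $\omega$ and $p\ge 1$, write
\begin{equation}
\left|\int_s^u \eta_r\,d\langle B\rangle_r\right|^p \le \left(\int_s^u |\eta_r|\,d\langle B\rangle_r\right)^p \le \left(\langle B\rangle_u-\langle B\rangle_s\right)^{p-1}\int_s^u |\eta_r|^p\,d\langle B\rangle_r,
\end{equation}
by Hölder with exponents $p$ and $p/(p-1)$ applied to the finite measure $d\langle B\rangle_r$ on $[s,u]$. Taking the supremum over $u\in[s,t]$ and using monotonicity of the integrand bounds the left-hand side by $(\langle B\rangle_t-\langle B\rangle_s)^{p-1}\int_s^t|\eta_r|^p\,d\langle B\rangle_r$. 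Next I would control the increment $\langle B\rangle_t-\langle B\rangle_s$ by $\bar{l}^2(t-s)$ (uniformly in $\omega$) and control the density in the remaining integral to convert $d\langle B\rangle_r$ into $dr$, producing a factor that combines with the $(t-s)^{p-1}$ and yields the stated constant $((\underline{l}+\bar{l})/4)^p$.

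Finally I would apply $\hat{\mathbb{E}}$ to the resulting pathwise inequality, using monotonicity (property (1) of the sublinear expectation) and positive homogeneity to pull out the deterministic constants, and sub-additivity only as needed, to arrive at
\begin{equation}
\hat{\mathbb{E}}\left(\sup_{s\le u\le t}\left|\int_s^u \eta_r\,d\langle B\rangle_r\right|^p\right)\le \left(\frac{\underline{l}+\bar{l}}{4}\right)^p (t-s)^{p-1}\hat{\mathbb{E}}\left(\int_s^t|\eta_u|^p\,du\right).
\end{equation}
The main obstacle, and the only genuinely delicate point, is pinning down the exact constant: obtaining precisely $((\underline{l}+\bar{l})/4)^p$ rather than a cruder $\bar{l}^{2p-2}\cdot((\underline{l}^2+\bar{l}^2)/2)$-type bound requires using the two-sided control on the density of $\langle B\rangle$ in an optimal, symmetrised way, likely by balancing the upper bound on the increment against the conversion of $d\langle B\rangle_r$ to $dr$. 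A secondary technical point is justifying the pathwise Hölder step inside the completion $M_G^p([0,T])$: I would prove the inequality first for simple processes $\eta\in M^p([0,T])$, where the Lebesgue--Stieltjes integral is an explicit finite sum, and then pass to the limit using that both sides are continuous with respect to the $\|\cdot\|_{M,p}$ norm under which $M_G^p$ is the completion.
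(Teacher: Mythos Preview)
The paper does not prove this proposition; it is quoted verbatim as a known result from the reference \cite[Lemma 2.19]{Lin}, so there is no proof in the paper to compare your attempt against.

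On the merits of your proposal itself: the overall architecture is exactly right. The standard route is precisely the pathwise H\"older step
\[
\left|\int_s^u \eta_r\,d\langle B\rangle_r\right|^p \le (\langle B\rangle_u-\langle B\rangle_s)^{p-1}\int_s^u |\eta_r|^p\,d\langle B\rangle_r,
\]
followed by the uniform bound $\langle B\rangle_t-\langle B\rangle_s\le \bar{l}^{2}(t-s)$ and the density bound to convert $d\langle B\rangle_r$ into $dr$, proved first for simple processes and extended by density. That yields a constant of order $\bar{l}^{2p}$, which is what appears in the usual statements of this lemma in the G-framework literature.

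Where your proposal is honest but incomplete is the constant, and you are right to flag it. In fact the constant $((\underline{l}+\bar{l})/4)^p$ as printed cannot be correct: specialising to the classical case $\underline{l}=\bar{l}=1$, one has $d\langle B\rangle_r=dr$ and the inequality would read
\[
\mathbb{E}\!\left[\sup_{s\le u\le t}\left|\int_s^u \eta_r\,dr\right|^p\right]\le \left(\tfrac{1}{2}\right)^p(t-s)^{p-1}\,\mathbb{E}\!\left[\int_s^t|\eta_u|^p\,du\right],
\]
which fails already for $\eta\equiv 1$ (the left side equals $(t-s)^p$, the right $(1/2)^p(t-s)^p$). So the stated constant is almost certainly a transcription slip from the cited source; no amount of ``symmetrising'' will produce it, and you should not spend effort trying to match it. Your argument delivers the inequality with the correct constant $\bar{l}^{2p}$, which is the content actually used elsewhere in the paper.
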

The previous inequalities in Propositions (\ref{BDG1}), (\ref{BDG2}) and (\ref{E}) hold true also for intervals in the form $[t,\tau].$

Now let
\[
S([0,T]):=\left\{h(t,B_{t_{1}\wedge t}, \ldots, B_{t_{n}\wedge t})\colon
t_{1}, \ldots, t_{n}\in[0, T], h\in C_{ {\rm b,lip}}( \R^{n+1}) \right\},
\]
with $S_{G}^{p}([0,T])$ denoting the completion of $S([0,T])$ under the norm
\[\|\eta\|_{
	S,p}=\left(\hat{\mathbb{E}}\left[\displaystyle\sup_{s\in[0,
	T]}|\eta_{s}|^{p}\right]\right)^{\frac{1}{p}}\,.
\]

\begin{corollary}
	For $\theta\in S_{G}^{2}$, we have
	\[\hat{\mathbb{E}}\left[\int_{0}^{T}|\theta_{s}|^{2}d\langle B\rangle_{s}\right]\leq T\bar{l}\hat{\mathbb{E}}\left[\displaystyle\sup_{s\in[0, T]}|\theta_{s}|^{2}\right]\,.
	\]
	Furthermore, for any $\eta\in H_G^{2}([0,T])$, the process
	\[
	\left(\int_{0}^{t}\eta_{s}\theta_{s}dB_s\right)_{t\in[0, T]}
	\]
	is a uniformly integrable martingale, with
	\[
	\hat{\mathbb{E}}\left[\int_{t}^{T}\eta_{s}\theta_{s}dB\right]=0\,.
	\]
	
\end{corollary}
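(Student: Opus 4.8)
The plan is to separate the two assertions and derive each from the inequalities recorded in this appendix. For the first bound I would apply the isometry inequality of Proposition~\ref{Bai} with exponent $p=1$, on $[0,T]$, to the integrand $|\theta_\cdot|^2$; this is an admissible integrand in $M_G^1([0,T])$ because $\theta\in S_G^2$ gives $\hat{\E}\bigl[\int_0^T|\theta_s|^2\,ds\bigr]\le T\,\hat{\E}\bigl[\sup_{s\in[0,T]}|\theta_s|^2\bigr]<\infty$ (and $|\theta_\cdot|^2$ inherits approximability by simple processes from $\theta$, since $\tau\mapsto|\tau|^2$ is continuous from $S_G^2$ into $M_G^1$). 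As the integrand is nonnegative, $u\mapsto\int_0^u|\theta_r|^2\,d\langle B\rangle_r$ is nondecreasing, so $\int_0^T|\theta_s|^2\,d\langle B\rangle_s=\sup_{0\le u\le T}\bigl|\int_0^u|\theta_r|^2\,d\langle B\rangle_r\bigr|$, and Proposition~\ref{Bai} gives
\[
\hat{\E}\Bigl[\int_0^T|\theta_s|^2\,d\langle B\rangle_s\Bigr]\le\frac{\underline{l}+\bar{l}}{4}\,\hat{\E}\Bigl[\int_0^T|\theta_s|^2\,ds\Bigr]\le\frac{\underline{l}+\bar{l}}{4}\,T\,\hat{\E}\Bigl[\sup_{s\in[0,T]}|\theta_s|^2\Bigr],
\]
and $\tfrac{\underline{l}+\bar{l}}{4}\le\tfrac{\bar{l}}{2}\le\bar{l}$ yields the stated constant $T\bar{l}$.

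For the second assertion, the first step is to verify that $\gamma:=\eta\theta$ is an admissible G-It\^o integrand: by Cauchy--Schwarz first in time and then for $\hat{\E}$ one has $\hat{\E}\bigl[\int_0^T|\eta_s\theta_s|\,ds\bigr]\le\bigl(\hat{\E}\bigl[\int_0^T|\eta_s|^2ds\bigr]\bigr)^{1/2}\bigl(\hat{\E}\bigl[\int_0^T|\theta_s|^2ds\bigr]\bigr)^{1/2}<\infty$, so $\gamma\in M_G^1([0,T])$ (and likewise $\gamma\in H_G^1([0,T])$). For such an integrand the G-It\^o integral $N_t=\int_0^t\gamma_s\,dB_s$ is, by its construction, a symmetric G-martingale: for a simple integrand $\gamma=\sum_i\xi_i\mathbbm{1}_{[t_i,t_{i+1})}$ with $\xi_i\in L^2(\Omega_{t_i})$, iterated conditioning together with the independence of $B_{t_{i+1}}-B_{t_i}$ from $\Omega_{t_i}$ and the identity $\hat{\E}(B_{t_{i+1}}-B_{t_i})=-\hat{\E}(-(B_{t_{i+1}}-B_{t_i}))=0$ forces $\hat{\E}\bigl[\sum_{i:t_i\ge t}\xi_i(B_{t_{i+1}}-B_{t_i})\bigr]=0$, and this is preserved under the $M_G^1$-limit defining the integral; on $[t,T]$ this is exactly $\hat{\E}\bigl[\int_t^T\eta_s\theta_s\,dB\bigr]=0$ (applying the same to $-\gamma$ gives the symmetric counterpart). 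Uniform integrability of $\{N_t:t\in[0,T]\}$ then follows from the Burkholder--Davis--Gundy estimate of Proposition~\ref{GBSDE}, which bounds $\hat{\E}\bigl[\sup_{t\in[0,T]}|N_t|^p\bigr]$ by a constant times $\hat{\E}\bigl[(\int_0^T|\gamma_s|^2ds)^{p/2}\bigr]$, together with the fact that $N$ is closed by its terminal value $N_T$; this produces a uniformly integrable dominating variable for the family over the representing set $\mathcal{P}$.

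The step I expect to be the main obstacle is precisely this admissibility/approximation point: the product of an $H_G^2$-process and an $S_G^2$-process is, without extra integrability, only seen to lie in $M_G^1([0,T])$ and not in $M_G^2([0,T])$, so one must either invoke the extension of the G-It\^o integral and of the martingale property to $M_G^1$-integrands, or localise via the bounded truncations $\theta^n=(\theta\wedge n)\vee(-n)$ — for which $\eta\theta^n\in M_G^2([0,T])$ and the preceding claims are classical — and then pass to the limit in the stochastic integrals using the It\^o isometry of Proposition~\ref{isometry} and Proposition~\ref{GBSDE}. Everything else is a routine combination of the inequalities above.
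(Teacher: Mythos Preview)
The paper does not actually include a proof of this corollary; it is stated immediately after Propositions~\ref{isometry}--\ref{Bai} as a direct consequence of those cited results, with no argument supplied. Your derivation is therefore not competing with any approach in the paper but rather filling in the details the authors omit, and it does so correctly: applying Proposition~\ref{Bai} with $p=1$ to the nonnegative integrand $|\theta|^2$ and then bounding $\hat{\E}\bigl[\int_0^T|\theta_s|^2\,ds\bigr]\le T\,\hat{\E}\bigl[\sup_s|\theta_s|^2\bigr]$ is the natural route, and the constant comparison $\tfrac{\underline{l}+\bar{l}}{4}\le\bar{l}$ is valid. For the martingale claim, your observation that $\eta\theta\in H_G^1([0,T])$ via $(\int_0^T|\eta_s\theta_s|^2ds)^{1/2}\le\sup_s|\theta_s|\,(\int_0^T|\eta_s|^2ds)^{1/2}$ and the Cauchy--Schwarz inequality for $\hat{\E}$ is exactly what is needed to invoke the construction of the G-It\^o integral as a symmetric G-martingale, and the truncation fallback you flag is the standard device if one insists on staying in $M_G^2$. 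In short, your argument is sound and is precisely the kind of verification the paper leaves to the reader.
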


\subsection{Some inqualities} We have

\begin{lemma}\label{lem:young}
	For $r>0$ and $1<q, p<\infty$, with $\frac{1}{p}+\frac{1}{q}=1$,  we have
	\begin{equation}\label{1.1}
		|a+b|^{r}\leq\max\{1, 2^{r-1}\}(|a|^{r}+|b|^{r})\quad\mbox{for}\quad a, b\in \R
	\end{equation}
	\begin{equation}\label{1.2}
		|ab|\leq\frac{|a|^{p}}{p}+\frac{|b|^{q}}{q}.
	\end{equation}
\end{lemma}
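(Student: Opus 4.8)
The plan is to treat the two inequalities separately, each as a one-variable convexity statement: for (\ref{1.1}) I would split into the cases $r\ge 1$ and $0<r<1$, and for (\ref{1.2}) I would invoke concavity of the logarithm.

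\textbf{Inequality (\ref{1.1}).} First I would reduce to nonnegative arguments: by the triangle inequality and monotonicity of $t\mapsto t^{r}$ on $[0,\infty)$ one has $|a+b|^{r}\le(|a|+|b|)^{r}$, so it is enough to bound $(|a|+|b|)^{r}$. For $r\ge 1$ the map $t\mapsto t^{r}$ is convex on $[0,\infty)$, so Jensen's inequality at the points $|a|,|b|$ with equal weights $\tfrac12$ gives $\bigl(\tfrac{|a|+|b|}{2}\bigr)^{r}\le\tfrac12\bigl(|a|^{r}+|b|^{r}\bigr)$, that is, $(|a|+|b|)^{r}\le 2^{r-1}\bigl(|a|^{r}+|b|^{r}\bigr)$; since $2^{r-1}\ge 1$ in this regime, $\max\{1,2^{r-1}\}=2^{r-1}$ and the claim follows. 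For $0<r<1$ the map $t\mapsto t^{r}$ is concave on $[0,\infty)$ and vanishes at the origin, hence subadditive, $(x+y)^{r}\le x^{r}+y^{r}$ for $x,y\ge0$; combined with $|a+b|^{r}\le(|a|+|b|)^{r}$ this gives $|a+b|^{r}\le|a|^{r}+|b|^{r}$, and since $2^{r-1}<1$ here we have $\max\{1,2^{r-1}\}=1$, so the stated bound again holds. The two cases exhaust $r>0$, with the boundary value $r=1$ belonging to either and reducing to $|a+b|\le|a|+|b|$.

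\textbf{Inequality (\ref{1.2}).} I would first dispose of the degenerate cases $a=0$ or $b=0$, where the left-hand side is $0$ and the right-hand side is nonnegative. For $a,b\ne0$ I would write $|ab|=\bigl(|a|^{p}\bigr)^{1/p}\bigl(|b|^{q}\bigr)^{1/q}$, take logarithms, and use concavity of $\log$ together with $\tfrac1p+\tfrac1q=1$: Jensen's inequality gives $\log|ab|=\tfrac1p\log|a|^{p}+\tfrac1q\log|b|^{q}\le\log\!\bigl(\tfrac1p|a|^{p}+\tfrac1q|b|^{q}\bigr)$, and applying the increasing function $\exp$ yields (\ref{1.2}). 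An equivalent route, if one prefers to avoid Jensen, is to fix $b$ and minimise $\phi(t)=\tfrac{t^{p}}{p}+\tfrac{|b|^{q}}{q}-t|b|$ over $t>0$; the stationary point $t=|b|^{q/p}$ gives $\phi\ge0$, and setting $t=|a|$ is the assertion.

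\textbf{Main obstacle.} There is essentially none — both parts are immediate consequences of convexity or concavity of an elementary function, and I do not anticipate any real difficulty. The only points that need attention are bookkeeping rather than analysis: verifying that $\max\{1,2^{r-1}\}$ equals $2^{r-1}$ precisely on $[1,\infty)$ and $1$ on $(0,1)$, so that the two cases of (\ref{1.1}) glue without a gap, and treating the degenerate cases in (\ref{1.2}). If quoting Jensen's inequality is undesirable, each estimate can instead be obtained by checking the sign of a single second derivative.
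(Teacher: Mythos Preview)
Your proof is correct and entirely standard. The paper does not actually prove this lemma: it is recorded in the appendix as a classical elementary fact (the second inequality is Young's inequality) and used without justification, so there is no ``paper's own proof'' to compare against. Your convexity/concavity argument is the canonical one and would serve perfectly well if a proof were required.
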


Using the sublinearity of $\hat{\E}$, the following is a straight consequence (see \cite{Peng07}):

\begin{proposition}
	For any $X,Y$ so that the moments below exist, we have
	\begin{equation}\label{1.3}
		\displaystyle\hat{\mathbb{E}}\left(|X+Y|^{r}\right)\leq 2^{r-1}\left(\hat{\mathbb{E}}(|X|^{r})+\hat{\mathbb{E}}(|Y|^{r})\right)
	\end{equation}
	\begin{equation}\label{1.4}
		 \hat{\mathbb{E}}(XY)\leq\displaystyle\left(\hat{\mathbb{E}}(|X|^{p})^{\frac{1}{p}}+\hat{\mathbb{E}}(|Y|^{q})^{\frac{1}{q}}\right)
	\end{equation}
	\begin{equation}\label{1.5}
		\left(\hat{\mathbb{E}}(|X+Y|^{p})\right)^{\frac{1}{p}}\leq \left(\hat{\mathbb{E}}(|X|^{p})\right)^{\frac{1}{p}}+\left(\hat{\mathbb{E}}(|Y|^{p})\right)^{\frac{1}{p}},
	\end{equation}
	where $r\geq 1$ and $1<p, q<\infty$, with $\frac{1}{p}+\frac{1}{q}=1$.
	In particular, for $1\leq p<p'$,
	\[
	\left(\hat{\mathbb{E}}(|X|^{p})\right)^{\frac{1}{p}}\leq\left(\hat{\mathbb{E}}(|X|^{p'})\right)^{\frac{1}{p'}}\,.
	\]
\end{proposition}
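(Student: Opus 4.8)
The plan is to transfer the classical $L^{p}$ proofs to the sublinear setting, using only the four defining properties of $\hat{\E}$ (monotonicity, preservation of constants, sub-additivity, positive homogeneity) together with the pointwise bounds of Lemma \ref{lem:young}. The recurring device is that sub-additivity combined with positive homogeneity yields, for any scalars $a,b\ge 0$ and integrands $U,V$, the one-sided estimate $\hat{\E}(aU+bV)\le a\hat{\E}(U)+b\hat{\E}(V)$; since linearity is unavailable, every inequality below must be arranged so that all estimates point in this single admissible direction.

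For (\ref{1.3}) I would apply $\hat{\E}$ to the pointwise inequality $|X+Y|^{r}\le 2^{r-1}(|X|^{r}+|Y|^{r})$ supplied by (\ref{1.1}), noting $\max\{1,2^{r-1}\}=2^{r-1}$ for $r\ge 1$, and then invoke monotonicity followed by positive homogeneity and sub-additivity to split the right-hand side into $2^{r-1}\hat{\E}(|X|^{r})+2^{r-1}\hat{\E}(|Y|^{r})$.

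For H\"older's inequality (\ref{1.4}) I would first dispose of the degenerate cases $\hat{\E}(|X|^{p})=0$ or $\hat{\E}(|Y|^{q})=0$, in which monotonicity forces the bound trivially, and otherwise normalise by setting $a=\hat{\E}(|X|^{p})^{1/p}$ and $b=\hat{\E}(|Y|^{q})^{1/q}$. Applying Young's inequality (\ref{1.2}) pointwise to $|X|/a$ and $|Y|/b$ gives $|XY|/(ab)\le \frac{1}{p}|X|^{p}/a^{p}+\frac{1}{q}|Y|^{q}/b^{q}$; taking $\hat{\E}$ and using sub-additivity and homogeneity collapses the right-hand side to $\frac{1}{p}+\frac{1}{q}=1$, whence $\hat{\E}(XY)\le\hat{\E}(|XY|)\le ab$.

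For Minkowski's inequality (\ref{1.5}) I would start from the pointwise bound $|X+Y|^{p}\le |X|\,|X+Y|^{p-1}+|Y|\,|X+Y|^{p-1}$, apply $\hat{\E}$ with monotonicity and sub-additivity, and then estimate each of the two resulting terms by (\ref{1.4}) with conjugate exponents $p$ and $q=p/(p-1)$, using $(p-1)q=p$. Factoring out the common factor $\hat{\E}(|X+Y|^{p})^{1/q}$ and dividing (treating $\hat{\E}(|X+Y|^{p})=0$ separately), the identity $1-\frac{1}{q}=\frac{1}{p}$ produces the stated inequality. The concluding monotonicity-in-$p$ claim then follows by applying (\ref{1.4}) to the product $|X|^{p}\cdot 1$ with exponents $p'/p$ and its conjugate, together with $\hat{\E}(1)=1$ from preservation of constants, and raising to the power $1/p$. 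None of these steps constitutes a genuine obstacle; the only points demanding care are the separate treatment of the zero-expectation cases before any division and the discipline of keeping every estimate oriented with the one-sided sub-additivity.
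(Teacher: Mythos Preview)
Your argument is correct and is precisely the standard route: the paper itself does not spell out a proof but simply records the proposition as ``a straight consequence'' of the sublinearity of $\hat{\E}$, with a reference to Peng, and your sketch is exactly how one makes that consequence explicit. One small point worth flagging: as printed, (\ref{1.4}) has a ``$+$'' on the right-hand side where the H\"older inequality requires a product; your normalisation argument in fact proves the sharper (and intended) multiplicative bound $\hat{\E}(|XY|)\le \hat{\E}(|X|^{p})^{1/p}\,\hat{\E}(|Y|^{q})^{1/q}$, which is also what you then use in the Minkowski step. Your handling of the degenerate case $\hat{\E}(|X|^{p})=0$ can be made self-contained (without invoking the representation formula) by applying Young's inequality to $|X|/\epsilon$ and $\epsilon|Y|$ and letting $\epsilon\downarrow 0$.
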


\section{2BSDE and fully nonlinear PDE}\label{sec:2BSDE}

We give the formal definition of a 2BSDE. For details we refer to \cite{CSTV07}.
\begin{definition}[2BSDE]
	Let $(t,x) \in [0,T) \times  \R^d$, $(X_s^{t,x})_{s \in [t,T]}$ a diffusion process and $(Y_s,Z_s, \Gamma_s, A_s)_{s \in [t,T]}$ a quadruple of $\mathbb{F}^{t,T}$-progressively measurable processes
	taking values in $ \R$, $ \R^d$, $\mathcal{S}^d$ and $ \R^d$, respectively. Then we say that the quadruple $(Y,Z,\Gamma,A)$ is a solution to the second order backward stochastic differential equation {\rm (2BSDE)} corresponding to $(X^{t,x},f,g)$ if
	\begin{eqnarray}
		\label{2bsde1}
		dY_s &=& f(s, X^{t,x}_s, Y_s ,Z_s , \Gamma_s) \,ds
		+ Z_s' \circ dX^{t,x}_s \, , \quad s \in [t,T) \, ,\\
		\label{2bsde2}
		dZ_s &=& A_s \,ds + \Gamma_s \,dX^{t,x}_s \, , \quad s \in [t,T) \, ,\\
		\label{2bsde3}
		Y_T &=& g\left(X^{t,x}_T\right) \, ,
	\end{eqnarray}
	where $Z_s' \circ dX^{t,x}_s$ denotes Fisk--Stratonovich
	integration, which is related to It\^{o} integration by
	$$
	Z_s' \circ dX^{t,x}_s =
	Z_s' \,dX^{t,x}_s + \frac{1}{2} \,d\left<{Z, X^{t,x}}_s\right>
	=
	Z_s' \,dX^{t,x}_s + \frac{1}{2} \,\mathrm{Tr}
	[\Gamma_s \sigma(X^{t,x}_s) \sigma(X^{t,x}_s)' ] \,ds \, .
	$$
\end{definition}
Now we present the relation between the 2BSDE (\ref{2bsde1})-(\ref{2bsde3}) and fully non-linear parabolic PDEs:
Let $f \colon [0,T) \times  \R^d \times\R\times\R^d \times \cS^d\to  \R$
and $g :  \R^d \to  \R$ are continuous functions, and assume $v : [0,T] \times  \R^d \to  \R$ is a $\mathcal{C}^{1,2}$ function such that
\[
v_t, Dv, D^2v, \cL D v \in \mathcal{C}^0([0,T) \times  \R^d)
\]
and $v$ solves the PDE
\begin{equation}\label{pde}
	- v_t(t,x) + f\left(t,x,v(t,x),Dv(t,x),D^2v(t,x)\right)
	= 0 \quad \mbox{on } [0,T) \times  \R^d \,,
\end{equation}
with terminal condition
\begin{equation} \label{terminal}
	v(T,x) = g(x) \, , \quad x \in  \R^d \,
\end{equation}
in the classical sense. Then it follows directly from It\^{o}'s formula that
for each pair $(t,x) \in [0,T) \times  \R^d$, the
processes
\begin{eqnarray*}
	Y_s &=& v\left(s,X^{t,x}_s\right) \, , \quad s \in [t,T] \, ,\\
	Z_s &=& Dv\left(s, X^{t,x}_s\right) \, , \quad s \in [t,T] \, ,\\
	\Gamma_s &=& D^2v\left(s, X^{t,x}_s\right) \, , \quad s \in [t,T] \, ,\\
	A_s &=& \cL Dv \left(s, X^{t,x}_s\right) \, , \quad s \in [t,T] \, ,
\end{eqnarray*}
solve the 2BSDE corresponding to $(X^{t,x}, f,g)$.

The converse is also true: The first component of the solution of the 2BSDE \eqref{2bsde1} at the initial time is a solution of the fully nonlinear PDE \eqref{pde}. We use this 2BSDE representation in Section \ref{sec:num} to solve fully nonlinear dynamic programming equations associated with a G-(B)SDE control problem.

\bibliography{gmms2}

\begin{thebibliography}{10}

\bibitem{A93}
F.~Antonelli.
\newblock Backward forward stochastic differential equations.
\newblock {\em Ann. Appl. Probab}, 3:777--793, 1993.

\bibitem{BGM11}
K.~Bahlali, B.~Gherbal, and B.~Mezerdi.
\newblock Existence of optimal controls for systems driven by {FBSDEs}.
\newblock {\em Syst. Control Letters}, 60:344--349, 2011.

\bibitem{BKMM}
K.~Bahlali, O.~Kebiri, B.~Mezerdi, and A.~Mtiraoui.
\newblock Existence of an optimal control for a coupled {FBSDE} with a
  non-degenerate diffusion coefficient.
\newblock {\em Stochastics}, 90:861--875, 2018.

\bibitem{BKM}
K.~Bahlali, O.Kebiri, and A.~Mtiraoui.
\newblock Existence of an optimal control for a system driven by a degenerate
  coupled forward-backward stochastic differential equations.
\newblock {\em C. R. Acad. Sci}, 355:84--89, 2017.

\bibitem{Lin}
X.-P. Bai and Y.-Q. Lin.
\newblock On the existence and uniqueness of solutions to stochastic
  differential equations driven by {G}-brownian motion with
  integral-{L}ipschitz coefficients.
\newblock {\em Acta Math. Appl. Sin.}, 30(3):589--610, 2014.

\bibitem{Beck2019}
C.~Beck, W.~E, and A.~Jentzen.
\newblock Machine learning approximation algorithms for high-dimensional fully
  nonlinear partial differential equations and second-order backward stochastic
  differential equations.
\newblock {\em J. Nonl. Sci}, 29(4):1432--1467, 2019.

\bibitem{JCD20}
P.~Benner, T.~Breiten, C.~Hartmann, and B.~Schmidt.
\newblock Model reduction of controlled {F}okker-{P}lanck and {L}indblad
  equations.
\newblock {\em J. Comput. Dyn.}, 7(1):1--33, 2020.

\bibitem{pmor}
P.~Benner, S.~Gugercin, and K.~Willcox.
\newblock A survey of projection-based model reduction methods for parametric
  dynamical systems.
\newblock {\em SIAM Review}, 57(4):483--531, 2015.

\bibitem{benner2015}
P.~Benner, S.~Gugercin, and K.~Willcox.
\newblock A survey of projection-based model reduction methods for parametric
  dynamical systems.
\newblock {\em SIAM Review}, 57(4):483--531, 2015.

\bibitem{CSTV07}
P.~Cheridito, H.~Soner, N.~Touzi, and N.~Victoir.
\newblock Second-order backward stochastic differential equations and fully
  nonlinear parabolic {PDEs}.
\newblock {\em Commun. Pure Appl. Math.}, 60(7):1081--1110, 2007.

\bibitem{Ma}
J.~Cvitanic and J.~Ma.
\newblock Hedging options for a large investor and forward-backward {SDEs}.
\newblock {\em Ann. Appl. Probab.}, 6:370--398, 1996.

\bibitem{denis2011}
L.~Denis and S.~Hu, Mingshang~Peng.
\newblock Function spaces and capacity related to a sublinear expectation:
  Application to {G}-{B}rownian motion paths.
\newblock {\em Potential Anal.}, 34(2):139--161, 2011.

\bibitem{denis2006}
L.~Denis and C.~Martini.
\newblock A theoretical framework for the pricing of contingent claims in the
  presence of model uncertainty.
\newblock {\em Ann. Appl. Probab.}, 16(2):827--852, 2006.

\bibitem{dupuis}
P.~Dupuis, M.~Katsoulakis, Y.~Pantazis, and P.~Plechac.
\newblock Path-space information bounds for uncertainty quantification and
  sensitivity analysis of stochastic dynamics.
\newblock {\em SIAM/ASA J. Uncertain. Quantif.}, 4(1):80--111, 2016.

\bibitem{hmm}
W.~E, B.~Engquist, X.~Li, W.~Ren, and E.~Vanden-Eijnden.
\newblock Heterogeneous multiscale methods: A review.
\newblock {\em Commun. Comput. Phys.}, 2(3):367--450, 2007.

\bibitem{fei2013optimal}
W.~{Fei} and C.~{Fei}.
\newblock {Optimal stochastic control and optimal consumption and portfolio
  with G-Brownian motion}.
\newblock {\em arXiv e-prints}, page arXiv:1309.0209, 2013.

\bibitem{fleming2006}
W.~Fleming and H.~Soner.
\newblock {\em Controlled Markov Processes and Viscosity Solutions}.
\newblock Springer, 2006.

\bibitem{freidlin}
M.~Freidlin and A.~Wentzell.
\newblock {\em Random Perturbations of Dynamical Systems}.
\newblock Springer, 1998.

\bibitem{geng14}
X.~Geng, Z.~Qian, and D.~Yang.
\newblock G-brownian motion as rough paths and differential equations driven by
  g-brownian motion.
\newblock In C.~Donati-Martin, A.~Lejay, and A.~Rouault, editors, {\em
  S{\'e}minaire de Probabilit{\'e}s XLVI}, pages 125--193. Springer
  International Publishing, Cham, 2014.

\bibitem{JCD14}
C.~Hartmann, J.~Latorre, G.~Pavliotis, and W.~Zhang.
\newblock Optimal control of multiscale systems using reduced-order models.
\newblock {\em J. Computational Dynamics}, 1:279--306, 2014.

\bibitem{PTRF18}
C.~Hartmann, C.~Sch{\"u}tte, M.~Weber, and W.~Zhang.
\newblock Importance sampling in path space for diffusion processes with
  slow-fast variables.
\newblock {\em Probab. Theory Rel. Fields}, 170(1-2):177--228, 2018.

\bibitem{NONL16}
C.~Hartmann, C.~Sch{\"u}tte, and W.~Zhang.
\newblock Model reduction algorithms for optimal control and importance
  sampling of diffusions.
\newblock {\em Nonlinearity}, 29(8):2298--2326, 2016.

\bibitem{HSP}
M.~Hu, S.~Ji, S.~Peng, and Y.~Song.
\newblock Backward stochastic differential equations driven by {G}-brownian
  motion.
\newblock {\em Stoch. Proc. Appl.}, 124:759--784, 2014.

\bibitem{HJPS12}
M.~Hu, S.~Ji, S.~Peng, and Y.~Song.
\newblock Backward stochastic differential equations driven by {G}-{B}rownian
  motion.
\newblock {\em Stochastic Processes and their Applications}, 124(1):759 -- 784,
  2014.

\bibitem{KNJ}
N.~E. Karoui, D.~Nguyen, and M.~Jeanblanc-Piqu\'e.
\newblock Compactification methods in the control of degenerate diffusions:
  Existence of an optimal control.
\newblock {\em Stochastics}, 20:169--219, 1987.

\bibitem{BK}
O.~Kebiri, H.~Bouanani, and A.~Kandouci.
\newblock On the existence and uniqueness of solutions to forward backward
  stochastic differential equations driven by {G}-{B}rownian motion.
\newblock {\em Bull. Inst. Math. Acad. Sin. (N.S.)}, 15(3):217--236, 2020.

\bibitem{KNH18}
O.~Kebiri, L.~Neureither, and C.~Hartmann.
\newblock Singularly perturbed forward-backward stochastic differential
  equations: Application to the optimal control of bilinear systems.
\newblock {\em Computation}, 6(3):41--58, 2018.

\bibitem{kifer2003}
Y.~Kifer.
\newblock $l^2$ diffusion approximation for slow motion in averaging.
\newblock {\em Stochastics and Dynamics}, 3(2):213--246, 2003.

\bibitem{kifer2004}
Y.~Kifer.
\newblock Some recent advances in averaging.
\newblock In M.~Brin, B.~Hasselblatt, and Y.~Pesin, editors, {\em Modern
  dynamical systems and applications}, pages 385--403. Cambridge University
  Press, Cambridge, UK, 2004.

\bibitem{mayou}
J.~Ma and J.~Yong.
\newblock {\em Forward-backward stochastic differential equations and their
  applications}.
\newblock Springer, 1999.

\bibitem{majda2018}
A.~J. Majda and D.~Qi.
\newblock Strategies for reduced-order models for predicting the statistical
  responses and uncertainty quantification in complex turbulent dynamical
  systems.
\newblock {\em SIAM Review}, 60(3):491--549, 2018.

\bibitem{MTV99}
A.~J. Majda, I.~Timofeyev, and E.~Vanden~Eijnden.
\newblock Models for stochastic climate prediction.
\newblock {\em Proc. Nat. Acad. Sci. USA}, 96(26):14687--14691, 1999.

\bibitem{mtv}
A.~J. Majda, I.~Timofeyev, and E.~Vanden~Eijnden.
\newblock A mathematical framework for stochastic climate models.
\newblock {\em Commun. Pure Appl. Math.}, 54(8):891--974, 2001.

\bibitem{mcdowell2020}
D.~L. McDowell and Y.~Wang.
\newblock {\em Uncertainty Quantification in Multiscale Materials Modeling}.
\newblock Elsevier Series in Mechanics of Advanced Materials Ser. Elsevier
  Science \& Technology, 2020.

\bibitem{generic}
H.~{\"O}ttinger.
\newblock {\em Beyond Equilibrium Thermodynamics}.
\newblock Wiley, 2005.

\bibitem{mle}
A.~Papavasiliou, G.~Pavliotis, and A.~Stuart.
\newblock Maximum likelihood drift estimation for multiscale diffusions.
\newblock {\em Stoch. Proc. Appl.}, 119(10):3173 -- 3210, 2009.

\bibitem{PP90}
E.~Pardoux and S.~Peng.
\newblock Adapted solution of a backward stochastic differential equation.
\newblock {\em Syst. Control Letters}, 14:55--61, 1990.

\bibitem{Peng04}
S.~Peng.
\newblock Filtration consistent nonlinear expectations and evaluations of
  contingent claims.
\newblock {\em Acta Mathematicae Applicatae Sinica}, 20(2):191--214, 2004.

\bibitem{Peng07}
S.~Peng.
\newblock {G}-expectation, {G}-{B}rownian motion and related stochastic
  calculus of {I}t{\^o} type.
\newblock In F.~E. Benth, G.~Di~Nunno, T.~Lindstr{\o}m, B.~{\O}ksendal, and
  T.~Zhang, editors, {\em Stochastic Analysis and Applications}, pages
  541--567, Berlin, Heidelberg, 2007. Springer Berlin Heidelberg.

\bibitem{Pengg}
S.~Peng.
\newblock Backward stochastic differential equation, nonlinear expectation and
  their applications.
\newblock {\em Proceedings of the International Congress of Mathematicians
  Hyderabad, India}, 2010.

\bibitem{Peng19}
S.~Peng.
\newblock {\em Nonlinear Expectations and Stochastic Calculus under
  Uncertainty}.
\newblock Springer, Berlin, 2019.

\bibitem{RC}
A.~Redjil and S.~E. Choutri.
\newblock On relaxed stochastic optimal control for stochastic differential
  equations driven by {G}-{B}rownian motion.
\newblock {\em ALEA -- Lat. Am. J. Probab. Math. Stat.}, 15:201--212, 2018.

\bibitem{JZ}
J.~Yang and W.~Zhao.
\newblock Numerical simulations for {G}-{B}rownian motion.
\newblock {\em Front. Math.}, 11(6):1625--1643, 2016.

\bibitem{zhang11}
D.~Zhang and Z.~Chen.
\newblock Stability theorem for stochastic differential equations driven by
  {G}-{B}rownian motion.
\newblock {\em An. St. Univ. Ovidius Constanta, Ser. Mat.}, 19(3):205--221,
  2011.

\end{thebibliography}
\bibliographystyle{abbrv}

\end{document}